


\documentclass{scrartcl}

\usepackage[T1]{fontenc}
\usepackage{lmodern}
\usepackage{amsmath,amssymb,amsthm}

\usepackage[french]{babel}

\usepackage[utf8]{inputenc}

\usepackage{url}


\usepackage[
backend=biber,
style=authoryear, 
citestyle=authoryear-comp,
maxnames=7,
sortcites=false 
]{biblatex}
\usepackage{csquotes}


\DeclareDelimFormat{nameyeardelim}{\addcomma\space}

\DeclareNameAlias{sortname}{given-family}

\renewcommand{\bibnamedash}{\leavevmode\raise3pt\hbox to3em{\hrulefill}\space}

\AtEveryBibitem{%
  \clearfield{issn} 
  \clearfield{isbn} 
  \clearfield{doi} 

  \ifentrytype{online}{}{
    \clearfield{url}
  }
}

\renewbibmacro{in:}{%
    \ifentrytype{article}{}{\printtext{\bibstring{in}\intitlepunct}}}

\DeclareFieldFormat[article,periodical,inreference]{number}{\mkbibparens{#1}}
\DeclareFieldFormat[article,periodical,inreference]{volume}{\mkbibbold{#1}}
\renewbibmacro*{volume+number+eid}{%
    \printfield{volume}%
    \setunit*{\addthinspace}
    \printfield{number}%
    \setunit{\addcomma\space}%
    \printfield{eid}}

\DeclareFieldFormat[article,inbook,incollection]{title}{\enquote{#1}\addcomma} 

\DefineBibliographyExtras{french}{\restorecommand\mkbibnamefamily}


\addbibresource{Bib1200-Rideau-Kikuchi.bib} 



\date{Novembre 2022}

\title{Sur un théorème de Lang--Weil tordu}
\subtitle{d'après E. Hrushovski, K. V. Shuddhodan et Y. Varshavsky}

\author{Silvain Rideau-Kikuchi}

\newcommand\Aa{\mathbf{A}}
\newcommand\ACFA{\mathrm{ACFA}}
\newcommand{\restr}[2]{{\left.#1\right|_{#2}}}
\newcommand\degins{\deg_{\mathrm{ins}}}
\newcommand\Tr{\mathrm{Tr}}
\newcommand\Qq{\mathbf{Q}}
\newcommand\Zz{\mathbf{Z}}
\newcommand\Cc{\mathbf{C}}
\newcommand\Ff{\mathbf{F}}
\newcommand\id{\mathrm{id}}

\newcommand\alg[1]{\overline{#1}}
\newcommand\Pp{\mathbf{P}}
\newcommand\oX{\overline{X}}
\newcommand\bd{\partial}
\newcommand\fU{\mathfrak{U}}
\newcommand\fm{\mathfrak{m}}
\newcommand\cO{\mathcal{O}}
\newcommand\fp{\mathfrak{p}}
\newcommand\Gal{\mathrm{Gal}}
\newcommand\oC{\overline{C}}

\newcommand\of{\overline{f}}
\newcommand\tdim{\mathrm{tdim}}
\newcommand\fS{\mathfrak{S}}


\newtheorem{theo}{Theorem}[section]
\newtheorem{prop}{Proposition}
\newtheorem{coro}{Corollaire}
\newtheorem{lemm}{Lemme}

\theoremstyle{definition}
\newtheorem{defi}{Definition}
\newtheorem{exem}{Exemple}

\begin{document}

\maketitle


Le fil directeur de cet exposé est la grande uniformité du comportement asymptotique, pour les grandes valeurs de \(q\), du morphisme de Frobenius
\[\begin{array}{rccc}
\phi_q: &K &\to & K\\ 
&x &\mapsto& x^q.
\end{array}\]
où \(K\) est un corps (algébriquement clos) de caractéristique positive \(p\) et \(q\) est une puissance de \(p\).

Les estimées de \textcite{LanWei} en sont un exemple classique: étant donné une variété algébrique\footnote{Dans cet exposé, on ne considèrera que des variétés \emph{irréductibles} sur un corps \(K\) algébriquement clos.} \(X\) de dimension \(d\) sur \(\alg{\Ff}_p\), pour toute puissance \(q\) de \(p\) suffisamment grande, \(X\) est définie par des équations à coefficients dans \(\Ff_q\) et le morphisme de Frobenius induit un morphisme \(\phi_{q,X} : X \to X\). Le nombre de points fixes de \(\phi_{q,X}\) est alors de l'ordre de
\[q^{d} + O(q^{d-1/2}).\]
De plus, la constante du \(O\) ne dépend que de la complexité des équations qui définissent~\(X\) --- mais pas de \(K\) ou \(p\).

Dans la mesure où les points fixes de \(\phi_{q}^n\) ne sont rien d'autre que les éléments du corps fini \(\Ff_{q^n}\), l'énoncé ci-dessus est, en fait, une estimation du nombre de points de \(X\) dans le corps \(\Ff_{q^n}\); ce qui est la manière classique de présenter le problème.\medskip

Un théorème remarquable de \textcite{Hru-Frob} donne une explication générale à ce comportement asymptotique uniforme, du moins pour ce qui est des propriétés que l'on peut exprimer par une formule du premier ordre. Ici, on considère des formules qui s'écrivent avec des symboles pour l'addition, la soustraction, la multiplication, les éléments \(0\) et \(1\) et un endomorphisme \(\sigma\) fixé et où on autorise la quantification uniquement sur les éléments des corps que l'on considère. Par exemple, la formule
\[\forall x \forall y\, \sigma(x + y) = \sigma(x) + \sigma(y) \wedge \sigma(x \cdot y) = \sigma(x)\cdot\sigma(y) \wedge\sigma(1) = 1\]
qui exprime que \(\sigma\) est un morphisme d'anneaux, ou encore les formules
\[\forall x_0 \ldots\forall x_n\, (\bigwedge_{i\leq n} \sigma(x_i) = x_i) \to \exists y\, \sigma(y) = y \wedge y^{n+1} + \sum_{i\leq n} x_i y^i = 0\]
qui expriment que le corps fixé par \(\sigma\) est algébriquement clos.

La théorie des modèles a une riche histoire de tels résultats asymptotiques.
L'un des plus anciens est un principe de transfert entre la grande
caractéristique positive et la caractéristique zéro, qui découle immédiatement
de la compacité de la logique du premier ordre. Pour toute formule \(\psi\)
\emph{sans le symbole d'automorphisme},
\begin{center}
pour tout \(p\) grand, la formule \(\psi\) est vérifiée dans \(\alg{\Ff}_p\),
\end{center}
si et seulement si,
\begin{center}
la formule \(\psi\) est vérifiée dans \(\Cc\).
\end{center}

Le théorème de Hrushovski généralise ce principe en prenant en compte le morphisme de Frobenius \(\phi_q\). D'après ce théorème, celui-ci se comporte pour les grandes valeurs de \(q\) comme un automorphisme de corps générique. Plus précisément, il existe une classe de << corps avec automorphisme générique >> dont on peut donner une axiomatisation explicite, habituellement notée \(\ACFA\) --- \emph{cf.} proposition~\ref{ACFA} pour une liste de ces axiomes. Cette classe est l'équivalent, pour les corps avec automorphisme, des corps algébriquement clos pour les corps sans automorphisme et la section~\ref{sec:ACFA} de cet exposé a pour but d'en donner une présentation plus exhaustive

Le résultat précis, que nous démontrerons dans la section~\ref{Frob}, énonce alors que pour toute formule \(\psi\),
\begin{center}
pour tout \(q\) grand\footnote{On considère bien ici toutes les puissances de tous les nombres premiers.}, la formule \(\psi\) est vraie de \(\alg{\Ff}_q\) et \(\phi_q\),
\end{center}
si et seulement si,
\begin{center}
la formule \(\psi\) est une conséquence des axiomes \(\ACFA\).
\end{center}

Ce résultat repose en grande partie sur deux résultats qui décrivent le comportement du morphisme \(\phi_q\) quand \(q\) varie. Le premier est un théorème de théorie algébrique des nombres, le théorème de densité de Chebotarev (voir, par exemple, \cite[théorème~6.3.1]{FriJar-FA}), qui décrit les liens entre le morphisme \(\phi_q\) et les extensions cycliques des corps de nombres. Le second est un résultat de géométrie algébrique qui généralise les estimées de Lang--Weil. C'est d'ailleurs un phénomène remarquable en soi que les propriétés asymptotiques au premier ordre du morphisme de Frobenius ne dépendent que de ces deux résultats, profonds, mais qui ne semblent \emph{a priori} couvrir qu'une petite partie des propriétés exprimables par des formules.

Comme on l'a vu ci-dessus, on peut voir les estimées de Lang--Weil comme un énoncé de comptage du nombre de points fixes du morphisme de Frobenius. On peut naturellement se demander si ce phénomène d'uniformité reste vrai pour des questions de comptage plus générales impliquant l'automorphisme de Frobenius. Le théorème~\ref{Twist LW} donne une réponse positive à cette question --- ainsi que son titre à l'exposé. 

Pour toute variété \(X\) sur un corps \(K\) algébriquement clos, on note \(X^{(q)} = X^{\phi_q}\) le changement de base de \(X\) le long de l'automorphisme de Frobenius et \(\phi_{q,X} : X \to X^{(q)}\) le morphisme induit par \(\phi_q\).
Si \(X\subseteq K^n\) est le lieu des zéros des polynômes \(P_1,\ldots,P_m \in K[x_1,\ldots,x_n]\), alors \(X^{(q)}\) est le lieu des zéros des polynômes \(\phi_q(P_i)\) et \(\phi_{q,X}\) est l'action de \(\phi_q\) coordonnée par coordonnée.

Les estimées de Lang--Weil tordues affirment alors que, pour toute variété \(X\) de dimension \(d\) sur \(K\), tout \(q\) suffisamment grand et toute sous-variété \(C \subseteq X\times X^{(q)}\) vérifiant certaines hypothèses techniques, l'intersection de \(C\) avec le graphe du morphisme de Frobenius \(\phi_{q,X}\) contient un nombre de points de l'ordre de
\[c q^d + O(q^{d-1/2}),\]
où \(c\) est une constante qui dépend de la géométrie de \(C\), et les différentes bornes ne dépendent que de la complexité des équations qui définissent \(X\) et \(C\).

Ce résultat est aussi originellement dû à \textcite{Hru-Frob} et sa preuve repose sur le développement de nouveaux outils, intéressants en eux-mêmes, introduits à cette occasion --- dont le développement d'une théorie des schémas aux différences ainsi que la théorie des modèles de certains corps valués avec automorphisme.
Nous exposerons, dans la section~\ref{Twist}, une preuve récente de \textcite{ShuVar} qui, pour les citer, est <<~purement géométrique~>>.\medskip

Enfin, dans la section~\ref{App}, nous discuterons de certaines applications de ces résultats en dynamique algébrique et en géométrie algébrique aux différences.
\medskip

Pour conclure cette introduction, je voudrais remercier Élisabeth Bouscaren, Antoine Chambert-Loir et Martin Hils pour nos discussions ainsi que leurs nombreux commentaires dont ce texte a grandement bénéficié.

\section{Automorphisme générique}\label{sec:ACFA}

Commençons par introduire la notion d'automorphisme << générique >> ainsi que divers outils de théorie des modèles nécessaires à sa définition.

Un objet central de ce texte est l'étude des équations dites << aux différences >>, c'est-à-dire les équations de la forme (pour le cas en une variable) \[\sum_{i_j\leq d} a_i x^{i_0} \sigma(x)^{i_1} \cdots \sigma^n(x)^{i_n} = 0,\]
où les coefficients \(a_i\) sont dans un corps \(K\) sur lequel on a choisi un endomorphisme~\(\sigma\) --- on parle alors de corps aux différences \((K,\sigma)\).
Historiquement, le nom d'équations aux différences est hérité du cas où \(K = k(t)\) est un corps de fonctions rationnelles sur un corps \(k\) et \(\sigma(f) = f(t +1)\). Un autre exemple classique est celui des équations aux \(q\)-différences où l'on considère le morphisme \(\sigma(f) = f(qt)\).
Comme on peut le voir dans ces exemples, leur étude est étroitement liée à celle de la dynamique algébrique.

Pour étudier ces équations, il est utile de disposer de << domaines universels >> dans lesquels toutes les équations aux différences ont des solutions --- voire suffisamment de solutions pour pouvoir en détecter la structure. La théorie des modèles fournit une notion abstraite d'un tel domaine:

\begin{defi}
Un corps aux différences \((K,\sigma)\) est dit \emph{existentiellement clos} si tout système d'équations aux différences sur \(K\) (en plusieurs variables) qui a une solution dans un corps aux différences \((L,\tau)\) qui contient \(K\) --- et dont l'endomorphisme~\(\tau\) étend~\(\sigma\) --- a une solution dans \(K\).
\end{defi}

En d'autres termes, pour toute formule \(\psi(x_1,\ldots,x_n)\) sans quantificateurs à paramètres dans \(K\) dans laquelle les variables non quantifiées sont parmi \(x_1,\ldots,x_n\), si \(\psi\) est vérifiée par des éléments \(a_1,\ldots,a_n\) d'une extension de \((K,\sigma)\), elle est déjà vérifiée par des élément \(c_1,\ldots,c_n\in K\).

On dit que l'automorphisme \(\sigma\) est \emph{générique} puisque tout comportement possible d'un automorphisme de corps se retrouve dans le corps aux différences \((K,\sigma)\).\medskip

Si l'on travaille seulement dans le langage des anneaux (c'est-à-dire sans le symbole pour l'endomorphisme \(\sigma\)), un corps est existentiellement clos si et seulement s'il est algébriquement clos --- c'est exactement ce qu'énonce le \emph{Nullstellensatz} de Hilbert. Il suffit donc dans ce cas de considérer des équations en une seule variable.

Les corps aux différences existentiellement clos sont, par définition, ceux qui
vérifient une forme du \emph{Nullstellensatz} pour les équations aux
différences. Mais il est aussi possible, dans ce cas, de caractériser quelles
équations aux différences doivent avoir une solution dans un corps aux
différences pour qu'il soit existentiellement clos.

Soit \(X\) une variété sur un corps aux différences \((K,\sigma)\) (algébriquement clos). On note~\(X^\sigma\) le changement de base de~\(X\) le long de~\(\sigma\) et \(\sigma_X : X \to X^\sigma\) le morphisme induit par~\(\sigma\). Comme précédemment, si \(X \subseteq  K^n\) est le lieu des zéros des polynômes \(P_1,\ldots,P_m \in K[x_1,\ldots,x_n]\), alors \(X^\sigma\) est le lieu des zéros des polynômes \(\sigma(P_i)\) obtenus en faisant agir~\(\sigma\) sur les coefficients et pour tout \((a_1,\ldots,a_n)\in X\), \(\sigma_X(a) = (\sigma(a_1),\ldots,\sigma(a_n))\).

Un morphisme \(f : X \to Y\) entre variétés est dit \emph{dominant} s'il est d'image dense pour la topologie de Zariski. On peut maintenant énoncer la caractérisation suivante, isolée par Hrushovski (\emph{cf.} \textcite[172--173]{Mac-ACFA}), des corps aux différences existentiellement clos:

\begin{prop}\label{ACFA}
Un corps aux différences \((K,\sigma)\) est existentiellement clos si et seulement si:
\begin{enumerate}
\item Le corps \(K\) est algébriquement clos;
\item Le morphisme \(\sigma\) est surjectif;
\item\label{ACFA geom} Pour toute variété affine \(X\) sur \(K\) et toute sous-variété \(C \subseteq X\times X^\sigma\) telle que les projections vers \(X\) et \(X^\sigma\) sont dominantes, l'ensemble des couples \((x,\sigma(x))\), avec \(x\in X(K)\), est Zariski dense dans \(C\).
\end{enumerate}
\end{prop}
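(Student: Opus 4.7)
La preuve se décompose en deux implications que je traiterais par des constructions séparées. Pour la nécessité, les trois conditions se vérifient indépendamment. La condition (1) découle de ce que toute équation \(P(x) = 0\) sans \(\sigma\) admet une solution dans \(\alg K\), muni d'un prolongement quelconque de \(\sigma\); la clôture existentielle donne alors une solution dans \(K\). La surjectivité (2) s'obtient en plongeant \((K,\sigma)\) dans un corps aux différences \((L,\tau)\) où \(\tau\) est un automorphisme, par exemple via la limite inductive du système \(K \xrightarrow{\sigma} K \xrightarrow{\sigma} \cdots\); toute équation \(\sigma(x) = a\) a alors une solution dans \(L\), donc dans \(K\).

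Pour la condition géométrique (3), l'étape clé est de considérer le point générique \(\eta = (\alpha,\beta)\) de \(C\): la dominance des projections assure que \(\alpha\) est générique dans \(X\) et \(\beta\) dans \(X^\sigma\), et l'isomorphisme canonique \(K(X) \to K(X^\sigma)\) prolongeant \(\sigma\) envoie le point générique de \(X\) sur celui de \(X^\sigma\), ce qui fournit un prolongement de \(\sigma\) à \(K(\alpha)\) envoyant \(\alpha\) sur \(\beta\). Après extension à un corps aux différences \((M,\sigma')\) contenant \(K(C)\), pour tout fermé propre \(D \subsetneq C\), l'élément \(\alpha\) vérifie la formule sans quantificateur <<\(x \in X \wedge (x,\sigma(x)) \in C \setminus D\)>>; la clôture existentielle donne alors une solution dans \(X(K)\), prouvant la densité.

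Pour la suffisance, soit \(\phi(x_1,\ldots,x_n)\) une formule sans quantificateur à paramètres dans \(K\) ayant une solution \(a = (a_1,\ldots,a_n)\) dans une extension \((L,\tau)\). En introduisant les variables auxiliaires \(y_i = \sigma^i(x)\) pour \(i \le N\), on réécrit \(\phi\) comme une formule algébrique \(\tilde\phi(y_0,\ldots,y_N)\). Soit \(Z\) l'adhérence de Zariski \(K\)-rationnelle du tuple \((a,\tau(a),\ldots,\tau^N(a))\), qui est irréductible puisque \(K\) est algébriquement clos. En notant \(X\) la projection de \(Z\) sur les \(N\) premières coordonnées, l'action de \(\tau\) sur les coefficients --- combinée à la surjectivité (2) --- montre que la projection sur les \(N\) dernières coordonnées est précisément \(X^\sigma\). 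L'identification de \(Z\) à une sous-variété \(C \subseteq X \times X^\sigma\) par la condition que les coordonnées médianes coïncident donne alors deux projections dominantes \(C \to X\) et \(C \to X^\sigma\).

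L'axiome (3) fournit alors la densité dans \(C\) des couples \((x,\sigma(x))\) avec \(x \in X(K)\). La contrainte \((x,\sigma(x)) \in C\) force \(x\) à être de la forme \((b_0,\sigma(b_0),\ldots,\sigma^{N-1}(b_0))\) pour un certain \(b_0 \in K^n\); comme \(\tilde\phi\) définit un ouvert non vide de \(Z\) (le point générique y satisfaisant), la densité fournit un tel \(b_0\) solution de \(\phi\). L'obstacle principal me semble être le contrôle précis de cet encodage par itération, en particulier la vérification que la projection de \(Z\) sur les \(N\) dernières coordonnées coïncide effectivement avec la variété tordue \(X^\sigma\) et non avec une sous-variété stricte --- ce qui sollicite conjointement la clôture algébrique et la surjectivité.
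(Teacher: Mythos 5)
L'exposé ne démontre pas cette proposition et renvoie à Macintyre; votre preuve suit le schéma standard (point générique de \(C\) pour la nécessité de~(3), lieu des itérées \(\sigma^i\) pour la suffisance) et est correcte. L'<<~obstacle~>> que vous signalez à la fin n'en est pas un: les conditions (1) et~(2) font de \(\sigma\) un \emph{automorphisme} de \(K\) (tout endomorphisme de corps étant injectif), et l'on vérifie alors que l'idéal \(J\) du lieu de \((\tau(a),\ldots,\tau^N(a))\) sur \(K\) égale \(\sigma(I_X)\), où \(I_X\) est l'idéal de \(X\): l'inclusion \(\sigma(I_X)\subseteq J\) vient de ce que \(\tau\) prolonge \(\sigma\), et réciproquement, pour \(Q\in J\), on écrit \(Q=\sigma(P)\) (surjectivité de~\(\sigma\)) et on obtient \(\tau\bigl(P(a,\ldots,\tau^{N-1}(a))\bigr)=Q(\tau(a),\ldots,\tau^{N}(a))=0\), d'où \(P\in I_X\) par injectivité de~\(\tau\). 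La condition~(1) garantit quant à elle que \(Z\), lieu d'un point sur \(K\), est géométriquement irréductible, donc une variété au sens de l'exposé. Signalons enfin un point de rédaction mineur: \(\tilde\phi\) définit un ensemble constructible de \(Z\) (pas nécessairement ouvert); comme il contient le point générique, il contient un ouvert dense, ce qui suffit pour conclure via la densité donnée par~(3).
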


Ces conditions peuvent s'exprimer par un ensemble (infini) de formules. La principale subtilité est de pouvoir quantifier sur les sous-variétés \(C \subseteq X\times X^\sigma\). Pour cela, il faut vérifier que, pour toute sous-variété \(X \subseteq \Aa_K^{n+m}\), l'ensemble des \(y \in \Aa_K^n\) tels que la fibre \(X_y\subseteq \Aa_K^m\) est (géométriquement) irréductible est donné par une formule --- en d'autres termes, que le lieu d'irréductibilité (géométrique) de la famille
\(X_y\) est constructible. Mais cela est bien connu, voir par exemple \textcite[théorème~9.7.7]{EGA4.3} ou \textcite{vdDSch} pour une approche
de cette question par le biais d'algèbres de polynômes non standards.
La seconde difficulté est de détecter, par une formule, quand un morphisme est dominant. Mais il suffit, pour cela, de savoir que la dimension d'une fibre est continue pour la topologie constructible --- voir, par exemple, \textcite[théorème~9.5.5]{EGA4.3}.

La classe des corps aux différences existentiellement clos admet donc une
axiomatisation (infinie), qui est habituellement notée \(\ACFA\). Cette
axiomatisation n'est pas complète, c'est-à-dire qu'il existe des corps aux
différences \((K,\sigma)\) et \((L,\tau)\) existentiellement clos qui ne
vérifient pas les mêmes formules sans variable non quantifiée --- on parle
habituellement d'énoncés. Pour qu'ils vérifient les mêmes énoncés, il faudrait
évidemment que \(K\) et \(L\) aient la même caractéristique. Mais il faut
également que les restrictions des automorphismes à la clôture algébrique de
leur corps premier soient conjuguées. \textcite[173--174]{Mac-ACFA} montre que
c'est, en fait, suffisant:

\begin{prop}\label{compl ACFA}
Soient \((K,\sigma)\) et \((L,\tau)\) des corps aux différences existentiellement clos, \(F\leq K\) un sous-corps aux différences algébriquement clos et \(f \colon F \to L\) un morphisme de corps aux différences --- c'est-à-dire un morphisme de corps tel que \(\sigma \circ f = f\circ \tau\). Alors pour toute formule \(\psi(x_1,\ldots,x_n)\) et tout \(a \in F^n\),
\[\text{\(a\) réalise \(\psi\) dans \((K,\sigma)\) si et seulement si \(f(a)\) réalise \(\psi\) dans \((L,\tau)\).}\]

En particulier, deux corps aux différences existentiellement clos \((K,\sigma)\) et \((L,\tau)\) vérifient les mêmes énoncés si et seulement si:
\[\text{les corps aux différences }(K_0,\restr{\sigma}{K_0})\text{ et } (L_0,\restr{\tau}{L_0})\text{ sont isomorphes,}\]
où \(K_0\) (respectivement \(L_0\)) est la clôture algébrique du corps premier de \(K\) (respectivement \(L\)).
\end{prop}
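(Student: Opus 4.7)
Mon plan est un argument classique de va-et-vient. Je passerais d'abord à des extensions élémentaires $\kappa$-saturées $(K^*, \sigma^*) \succeq (K, \sigma)$ et $(L^*, \tau^*) \succeq (L, \tau)$ pour $\kappa$ assez grand, et je considérerais la famille $\mathcal{S}$ des isomorphismes partiels de corps aux différences $g : F_0 \to L^*$, où $F_0 \leq K^*$ est un sous-corps algébriquement clos stable par $\sigma^*$ et $\sigma^{*-1}$, contient $F$, est de cardinalité $< \kappa$, et où $g$ étend $f$. Un argument standard montre qu'il suffit d'établir la propriété de va-et-vient pour $\mathcal{S}$ pour en conclure que $f$ est élémentaire ; la seconde assertion en découle alors en prenant $F = K_0$.

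Le point central est donc le lemme d'extension : étant donné $g \in \mathcal{S}$ de domaine $F_0$ et $a \in K^*$, on souhaite étendre $g$ pour y incorporer $a$. Pour chaque $n$, soit $V_n$ le lieu algébrique sur $F_0$ du $(n+1)$-uplet $(a, \sigma^*(a), \ldots, \sigma^{*n}(a))$ ; la donnée des $V_n$ détermine entièrement le type sans quantificateur de $a$ sur $F_0$ dans le langage des corps aux différences. L'observation clé est que $V_n$ s'identifie à une sous-variété $D_n \subseteq V_{n-1} \times V_{n-1}^{\sigma^*}$ via le fait que les $n$ dernières coordonnées de $V_n$ sont les $\sigma^*$-images des $n$ premières ; les deux projections de $D_n$ sont alors dominantes. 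On applique à $(g(V_{n-1}), g(D_n))$, dans $(L^*, \tau^*)$, la condition~\ref{ACFA geom} de la proposition~\ref{ACFA} : l'ensemble des $(y, \tau^*(y))$ avec $y \in g(V_{n-1})(L^*)$ est Zariski dense dans $g(D_n)$. Par saturation, on en extrait un $y = (b, \tau^*(b), \ldots, \tau^{*(n-1)}(b))$ générique sur $g(F_0)$, ce qui réalise la partie du type translaté à profondeur $n$. Par compacité, on obtient finalement un $b \in L^*$ réalisant l'intégralité du type translaté. Le morphisme $a \mapsto b$ s'étend alors en un isomorphisme de corps aux différences $F_0\langle a\rangle_{\sigma^*} \to g(F_0)\langle b\rangle_{\tau^*}$, les puissances négatives étant traitées dualement via la surjectivité de $\tau^*$ ; on passe enfin aux clôtures algébriques. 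Le pas inverse est symétrique.

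La difficulté principale me paraît être de vérifier que la dominance << en un pas >> fournie par l'axiome~\ref{ACFA geom} suffit à réaliser le type d'une orbite infinie : c'est précisément l'identification ci-dessus de $V_n$ à une sous-variété de $V_{n-1} \times V_{n-1}^{\sigma^*}$ qui rend l'axiome applicable. Un point technique secondaire concerne l'appariement des prolongements de $\sigma^*$ aux clôtures algébriques des deux côtés, qui se traite par un argument galoisien en choisissant convenablement les représentants dans un univers suffisamment saturé.
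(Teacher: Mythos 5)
La structure globale est correcte et proche de l'argument classique de \textcite[173--174]{Mac-ACFA} et de \textcite{ChaHru-ACFA}: extensions saturées, va-et-vient avec des domaines partiels algébriquement clos et \(\sigma^*\)-stables, et réalisation du type sans quantificateur par la condition~\ref{ACFA geom} appliquée aux correspondances \(D_n\subseteq V_{n-1}\times V_{n-1}^{\sigma^*}\). L'identification de \(V_n\) à \(D_n\) via \((x_0,\ldots,x_n)\mapsto((x_0,\ldots,x_{n-1}),(x_1,\ldots,x_n))\) est bien une immersion fermée, les deux projections sont dominantes comme vous le dites, et le traitement des puissances négatives est en fait automatique: si \((b,\ldots,\tau^{*n}(b))\) est générique dans \(g(V_n)\), alors \(\tau^{*-m}(b,\ldots,\tau^{*n}(b))\) est générique dans \(g(V_n)^{\tau^{*-m}}=g(V_n^{\sigma^{*-m}})\), car \(g(F_0)\) est \(\tau^*\)-stable.

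La lacune se situe dans ce que vous qualifiez de « point technique secondaire », à savoir le passage aux clôtures algébriques, qui est en réalité le c\oe ur de la preuve. La condition~\ref{ACFA geom} appliquée aux seules \(D_n\) ne contrôle que l'idéal des relations polynomiales aux différences satisfaites par \(a\) sur \(F_0\); elle ne dit rien sur l'action de \(\sigma^*\) sur \(\mathrm{acl}(F_0\langle a\rangle_{\sigma^*})\). Or, pour un \(b\) réalisant uniquement ce « type de variété », l'isomorphisme de corps \(F_0\langle a\rangle_{\sigma^*}\to g(F_0)\langle b\rangle_{\tau^*}\) peut ne posséder \emph{aucun} prolongement aux clôtures algébriques qui entrelace \(\sigma^*\) et \(\tau^*\): les deux prolongements de l'automorphisme à la clôture algébrique ne diffèrent que par un élément du groupe de Galois, et leur conjugaison tordue dans ce groupe dépend du choix précis de \(b\). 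Pour combler la lacune, il faut soit appliquer la condition~\ref{ACFA geom} à des variétés \(W_n\) plus grandes dont les corps de fonctions contiennent des générateurs des revêtements galoisiens successifs de \(F_0\langle a\rangle_{\sigma^*}\) --- de sorte que le type réalisé détermine le type d'isomorphisme de la clôture algébrique tout entière avec son action --- soit passer, comme dans la preuve classique, par un lemme d'amalgamation des corps aux différences au-dessus d'un sous-corps algébriquement clos (le point étant que \(L^*\otimes_{g(F_0)}\mathrm{acl}_{\sigma^*}(F_0 a)\) est intègre parce que \(F_0\) est algébriquement clos), puis par la clôture existentielle abstraite et la saturation. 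Tel que rédigé, l'« argument galoisien » ne prouve pas l'étape d'extension.
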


%
%

Le résultat précédent n'est pas exactement un résultat d'élimination des quantificateurs, mais il implique tout de même que les formules ont toutes une forme très simple, à équivalence près:

\begin{coro}
Toute formule \(\psi(x_1,\ldots,x_n)\) est équivalente, modulo les axiomes de \(\ACFA\), à une disjonction de formules de la forme:
\[\exists y\, \theta(x_1,\ldots,x_n,y),\]
où \(\theta\) est sans quantificateurs et, pour tous \(a_1,\ldots a_n\), il n'y a qu'un nombre fini uniformément borné de \(y\) qui vérifient \(\theta(a_1,\ldots,a_n,y)\).
\end{coro}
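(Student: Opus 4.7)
La stratégie combine la proposition~\ref{compl ACFA} avec un argument de compacité. Appelons \emph{algébrique} une formule de la forme \(\exists \bar y\, \theta(\bar x, \bar y)\), où \(\theta\) est sans quantificateurs et le nombre de \(\bar y\) vérifiant \(\theta(\bar a, \bar y)\) est fini, uniformément borné en \(\bar a\); soit \(\Sigma\) leur ensemble (on autorise \(\bar y\) à être un uplet, sans quoi il faudrait invoquer le théorème de l'élément primitif ou un codage analogue). L'espace des types complets sur \(\emptyset\) contenant \(\psi\) étant compact, il suffit d'établir l'énoncé suivant: pour tout tel type \(p(\bar x)\), il existe dans \(p \cap \Sigma\) une formule qui implique \(\psi\) modulo \(\ACFA\).

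Fixons \(p\) et supposons, par l'absurde, qu'aucune formule de \(p \cap \Sigma\) n'implique \(\psi\). Il existe alors deux modèles suffisamment saturés \((K, \sigma), (L, \tau) \models \ACFA\) et des uplets \(\bar a \in K^n\), \(\bar b \in L^n\) tels que \(\bar a\) réalise \(p\) dans \(K\), tandis que \(\bar b\) réalise toutes les formules de \(p \cap \Sigma\) dans \(L\) sans y satisfaire \(\psi\). Soit \(F\) la clôture algébrique, au sens des corps, du corps aux différences engendré par \(\bar a\) dans \(K\); comme \(\sigma\) est un automorphisme de \(K\) par les axiomes \(\ACFA\), \(F\) est un corps aux différences algébriquement clos. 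Par la proposition~\ref{compl ACFA}, il suffira donc, pour parvenir à la contradiction recherchée, de construire un morphisme de corps aux différences \(f \colon F \to L\) envoyant \(\bar a\) sur \(\bar b\).

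Un tel \(f\) s'obtient par va-et-vient, en utilisant la saturation de \((L, \tau)\). L'observation-clé est que, pour tout \(c \in F\) et tout entier \(N \geq 0\), il existe une formule sans quantificateurs \(\theta(\bar x, y_0, \ldots, y_N)\) dans le langage des anneaux, vérifiée par \((\bar a, c, \sigma(c), \ldots, \sigma^N(c))\), décrivant leur type sans quantificateurs sur \(\bar a\) et contenant l'équation minimale de \(c\) sur \(\langle \bar a \rangle_\sigma\). Après substitution de chaque \(y_k\) par \(\sigma^k(y_0)\), la formule sans quantificateurs \(\theta(\bar x, y_0, \sigma(y_0), \ldots, \sigma^N(y_0))\) n'admet, pour \(\bar x = \bar a\), qu'un nombre fini de solutions en \(y_0\): celui-ci est borné par le degré \([\langle \bar a \rangle_\sigma(c) : \langle \bar a \rangle_\sigma]\), donc uniformément en \(\bar a\). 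L'existentielle associée est dans \(\Sigma \cap p\), donc réalisée dans \(L\); en accumulant ces contraintes pour \(c\) variant dans \(F\) et \(N\) croissant, la saturation de \((L, \tau)\) permet de réaliser le type sans quantificateurs complet de \(F\) sur \(\bar a\), fournissant le morphisme \(f\) voulu.

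L'écueil principal est d'assurer que la formule \(\theta\) encode suffisamment de relations sous \(\sigma\) pour que la correspondance obtenue soit bien compatible avec \(\sigma\) (et pas uniquement un morphisme de corps): il faut en particulier inclure dans \(\theta\) les équations \(y_{k+1} = \sigma(y_k)\) et les polynômes minimaux des itérées \(\sigma^k(c)\). Ceci étant acquis, l'amalgamation des données partielles par saturation puis leur clôture sous les opérations de corps est une application standard de la méthode de va-et-vient.
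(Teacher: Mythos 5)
Le texte de l'exposé ne donne pas de démonstration explicite de ce corollaire : il est présenté comme une conséquence de la proposition~\ref{compl ACFA}, et votre argument est bien la déduction standard, correcte dans ses grandes lignes (passage à l'espace des types, compacité, puis réalisation par saturation d'un morphisme de corps aux différences pour invoquer la proposition~\ref{compl ACFA}). Deux remarques de détail néanmoins : d'abord, l'énoncé du corollaire porte sur une seule variable \(y\), et la réduction d'un uplet \(\bar y\) à une seule variable n'est pas tout à fait gratuite en caractéristique positive (Chatzidakis--Hrushovski énoncent d'ailleurs le résultat avec des uplets) ; vous signalez ce point, ce qui est juste. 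Ensuite, dans l'argument de va-et-vient, la borne uniforme sur le nombre de témoins ne vient pas du degré \([\langle \bar a \rangle_\sigma(c) : \langle \bar a \rangle_\sigma]\) (qui dépend de \(\bar a\)), mais du fait que la formule \(\theta\) fixée contient un polynôme en \(y_0\) de degré donné avec la non-annulation de son coefficient dominant (et des dénominateurs) en \(\bar x\) ; il faudrait le dire explicitement. Enfin, puisque \(\sigma\) est un automorphisme dans \(\ACFA\), il convient de prendre pour \(F\) la clôture algébrique du sous-corps aux différences \emph{inversif} engendré par \(\bar a\) ; comme \(\sigma^{-1}(a)\) est l'unique \(y\) vérifiant \(\sigma(y)=a\), les formules correspondantes restent bien dans \(\Sigma\), et l'argument passe sans modification.
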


La proposition~\ref{ACFA} a aussi pour conséquence que le corps fixé par \(\sigma\) dans un corps aux différences existentiellement clos est un corps dit <<~pseudo-fini~>>:

\begin{coro}
Soient \((K,\sigma)\) un corps aux différences existentiellement clos et \(K^\sigma = \{x \in K \mid \sigma(x) = x\}\) le corps fixé par \(\sigma\). On a alors que:
\begin{itemize}
\item Le corps \(K^\sigma\) est parfait;
\item Le groupe de Galois absolu de \(K^\sigma\) est engendré par \(\sigma\) et est donc isomorphe à \(\hat\Zz\);
\item Toute variété (géométriquement intègre) sur \(K^\sigma\) a un point dans \(K^\sigma\).
\end{itemize}
\end{coro}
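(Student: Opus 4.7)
Le plan est d'appliquer trois fois la Proposition~\ref{ACFA}, en choisissant à chaque fois une sous-variété \(C\) adaptée.

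Pour le premier point (perfection), en caractéristique \(0\) il n'y a rien à démontrer. En caractéristique \(p>0\), tout \(a\in K^\sigma\) admet une unique racine \(p\)-ième \(b\in K\); l'égalité \(\sigma(b)^p = \sigma(a) = a = b^p\) combinée à l'unicité des racines \(p\)-ièmes impose \(\sigma(b)=b\), d'où \(b\in K^\sigma\).

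Pour le troisième point, soit \(V\) une variété géométriquement intègre sur \(K^\sigma\); quitte à recouvrir par des ouverts affines, on la suppose affine. On pose \(X = V_K\): c'est une variété irréductible sur \(K\) grâce à l'intégrité géométrique, et \(X^\sigma = X\) puisque \(V\) est définie sur \(K^\sigma\). On applique alors la condition~(\ref{ACFA geom}) de la Proposition~\ref{ACFA} à \(X\) et à la diagonale \(C = \Delta_X \subseteq X \times X\), dont les deux projections sont des isomorphismes, donc dominantes: l'ensemble \(\{(x,\sigma(x)) : x\in X(K)\} \cap \Delta_X = \{(x,x) : x\in V(K^\sigma)\}\) est alors Zariski dense dans \(\Delta_X\), et en particulier non vide.

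Pour le deuxième point, l'automorphisme \(\sigma\) (surjectif d'après la Proposition~\ref{ACFA}) stabilise la clôture algébrique \(\overline{K^\sigma}\) et y induit un élément \(\tau\) du groupe de Galois absolu \(G = \Gal(\overline{K^\sigma}/K^\sigma)\), dont le corps des invariants est \(\overline{K^\sigma} \cap K^\sigma = K^\sigma\). Par la correspondance de Galois infinie, on a alors \(\overline{\langle \tau \rangle} = G\), si bien que \(G\) est topologiquement monogène. L'étape la plus délicate est alors de passer de << topologiquement monogène >> à << isomorphe à \(\hat{\Zz}\) >>, c'est-à-dire de montrer que \(G\) se surjecte sur \(\Zz/n\Zz\) pour tout \(n\geq 1\). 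L'idée sera d'utiliser encore la Proposition~\ref{ACFA} pour construire, à chaque \(n\), un \(x\in \overline{K^\sigma}\) d'orbite cyclique de taille exactement \(n\) sous \(\sigma\); par exemple, en caractéristique \(p\), la sous-variété \(C\subseteq \Aa^1_K \times \Aa^1_K\) d'équation \(y = x+1\) fournit un \(x\in K\) vérifiant \(\sigma(x)=x+1\), et alors \(x^p-x \in K^\sigma\) tandis que \(\sigma^k(x)=x+k\neq x\) pour \(0<k<p\), ce qui donne une extension d'Artin--Schreier de degré \(p\). Des constructions analogues (itérations de type Witt, ou analogues kumériens après adjonction de racines de l'unité) devraient couvrir les autres degrés.
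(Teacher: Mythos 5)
Le texte énonce ce corollaire sans démonstration, donc je ne peux que juger la vôtre sur le fond.

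Vos arguments pour la perfection et pour la propriété PAC sont corrects et conformes à ce qu'on attend : la perfection par unicité des racines \(p\)-ièmes, la PAC en appliquant la condition~(\ref{ACFA geom}) à \(C=\Delta_X\) (avec la remarque, que vous faites implicitement et qu'il faudrait expliciter, qu'un point de \(X(K)\) fixé coordonnée par coordonnée par \(\sigma\) est bien un \(K^\sigma\)-point).

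En revanche, le point sur le groupe de Galois est incomplet à l'endroit que vous signalez vous-même comme << le plus délicat >>, et la voie proposée pour le combler ne fonctionne pas telle quelle. Les extensions d'Artin--Schreier(--Witt) ne couvrent que les degrés \(p^k\); pour les degrés \(\ell\) premiers à \(p\), l'argument kumérien suppose \(\mu_\ell\subseteq K^\sigma\), ce qui n'est nullement garanti (et adjoindre \(\mu_\ell\) change le corps de base, ce qui rend l'argument circulaire). Vous aviez pourtant identifié la bonne idée --- construire une \(\sigma\)-orbite de taille exactement \(n\) formée d'éléments algébriques sur \(K^\sigma\) --- mais il faut l'exécuter uniformément, sans distinction de cas. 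Pour cela, appliquez directement la condition~(\ref{ACFA geom}) à \(X=\Aa_K^n\) et \(C\subseteq X\times X^\sigma\) le graphe du décalage cyclique \((x_0,\ldots,x_{n-1})\mapsto(x_1,\ldots,x_{n-1},x_0)\): on obtient un \(x\) avec \(\sigma(x_i)=x_{i+1\bmod n}\), et la densité permet de choisir les \(x_i\) deux à deux distincts. Les fonctions symétriques élémentaires des \(x_i\) sont alors \(\sigma\)-invariantes, donc dans \(K^\sigma\); le polynôme \(P(T)=\prod_i(T-x_i)\) est à coefficients dans \(K^\sigma\), séparable, et son corps de décomposition \(L=K^\sigma(x_0,\ldots,x_{n-1})\) est une extension galoisienne de \(K^\sigma\) sur laquelle \(\restr{\sigma}{L}\) agit avec corps fixe \(K^\sigma\): donc \(\Gal(L/K^\sigma)=\langle\restr{\sigma}{L}\rangle\), qui est cyclique d'ordre exactement \(n\) puisque \(\sigma\) permute cycliquement les \(x_i\) distincts. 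Cela donne la surjection \(G\twoheadrightarrow\Zz/n\Zz\) pour tout \(n\), sans hypothèse sur les racines de l'unité ni casework selon la caractéristique.
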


Cette classe doit son nom à \textcite{Ax-Psf} qui a démontré qu'un énoncé dans
le language des corps est vrai dans les corps finis de grand cardinal si et
seulement s'il est vrai dans les corps pseudo-finis. Comme les corps finis sont
exactement les corps fixés par les morphismes de Frobenius \(\phi_q\), le
résultat d'Ax est un cas particulier, ainsi qu'une des inspirations majeures, du
théorème de \textcite{Hru-Frob} dont nous discutons dans cet exposé.\medskip

La théorie des modèles des corps aux différences existentiellement clos est un sujet très riche qui va bien au-delà des quelques résultats fondamentaux, mais relativement élémentaires,
que nous utilisons ici. L'un des résultats centraux de cette théorie est le théorème de trichotomie de \textcite{ChaHru-ACFA} et \textcite{ChaHruPet} qui donne une description fine de la géométrie des ensembles de dimension \(1\) définissables par une formule. Ces résultats ont aussi des conséquences remarquables en dynamique algébrique, comme, par exemple, les travaux de \textcite{MedSca}. Mais leur exposition nous emporterait bien loin de notre sujet principal.

\section{Estimées de Lang-Weil tordues}\label{Twist}

Revenons maintenant à la question du comportement asymptotique du morphisme de Frobenius. Soient \(K\) un corps algébriquement clos de caractéristique strictement positive \(p\) et \(q\) une puissance de \(p\). D'après la proposition~\ref{ACFA}, pour montrer que \(\phi_q\) se comporte comme un automorphisme générique, il faut, \emph{a minima}, montrer que la condition~\ref{ACFA geom} est vraie, pour \(q\) grand. C'est le but de cette section, dans laquelle, nous montrons une estimation précise (voir théorème~\ref{Twist LW}) du nombre de points dans une telle intersection en suivant une preuve récente <<~purement géométrique~>> due à \textcite{ShuVar}.\medskip

Si \(f \colon X \to Y\) est un morphisme dominant entre variétés sur \(K\) de même dimension, on note \(\deg(f) = [K(X) : K(Y)]\); c'est le nombre de points (comptés avec multiplicités) dans une fibre générique de \(f\). On note aussi \(\degins(f)\) son degré inséparable; c'est le degré \([K(X): L]\) où \(L\) est la sous-extension séparable maximale de \(K(Y)\) dans \(K(X)\).
On note aussi \(\Gamma_{q,X} \subseteq X\times X^{(q)}\) le graphe de \(\phi_{q,X}\).

\begin{theo}\label{Twist LW}
Soient \(X\) une variété de dimension \(d\) sur un corps \(K\) algébriquement clos de caractéristique \(p\), \(q\) une puissance de \(p\) et \(C \subseteq X\times X^{(q)}\) une sous-variété telle que les projections \(p_1 \colon C \to X\) et \(p_2 \colon C \to X^{(q)}\) sont dominantes et \(p_2\) est quasi-finie. Alors, si \(q\) est suffisamment grand, \(\# C\cap\Gamma_{q,X} (K)\) est fini et
\[\# C\cap\Gamma_{q,X} (K) = \frac{\deg(p_1)}{\degins(p_2)} q^{d} + O(q^{d-1/2}).\]

De plus, les diverses bornes ne dépendent que de la complexité des équations qui définissent \(X\) et \(C\). Pour être précis, si \(X\subseteq \Pp_K^n\) est localement fermée, les bornes ne dépendent que de \(n\) et des degrés de la clôture \(\oX\) de \(X\), de \(\oX\setminus X\), de la clôture \(\oC\) de \(C\) et de \(\oC\setminus C\), en particulier, elles ne dépendent pas du corps \(K\) ou de sa caractéristique.
\end{theo}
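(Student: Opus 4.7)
Mon plan est d'interpréter $\#(C\cap\Gamma_{q,X}(K))$ comme une somme alternée de traces du Frobenius sur la cohomologie $\ell$-adique, via une formule de Lefschetz--Verdier appliquée à la correspondance tordue, puis d'isoler le terme dominant grâce aux bornes de poids de Deligne (Weil~II).

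Concrètement, je commencerais par des réductions géométriques: compactifier $X$ en une variété projective $\oX$ et $C$ en $\oC$, puis traiter la différence par récurrence sur la dimension, ce qui est cohérent avec l'uniformité annoncée puisque $\oX\setminus X$ et $\oC\setminus C$ sont de dimension strictement inférieure. Un éclatement approprié permet de se ramener au cas lisse, et les hypothèses de dominance combinées à la quasi-finitude de $p_2$ assurent que l'intersection $C\cap\Gamma_{q,X}$ est transverse pour $q$ assez grand. Fixant un nombre premier $\ell\neq p$, la correspondance $C\subseteq X\times X^{(q)}$ induit une correspondance cohomologique $u_C$ sur $H^\bullet_c(X,\Qq_\ell)$, et la formule de Lefschetz--Verdier fournit alors
\[\#(C\cap\Gamma_{q,X}(K)) = \sum_{i=0}^{2d}(-1)^i \Tr\bigl(\phi_q^*\circ u_C \mid H^i_c(X,\Qq_\ell)\bigr).\]

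Le terme dominant provient du degré maximal: $H^{2d}_c(X,\Qq_\ell)$ est de dimension un, le Frobenius géométrique y agit par $q^d$, et l'action de $C$ s'y calcule comme la multiplication par $\deg(p_1)/\degins(p_2)$ --- le degré inséparable traduisant précisément le défaut de séparabilité de $p_2$ en caractéristique $p$. Pour $i\leq 2d-1$, les bornes de poids de Deligne majorent les valeurs propres de $\phi_q$ sur $H^i_c$ par $q^{i/2}\leq q^{d-1/2}$; combinées à un contrôle des valeurs propres de $u_C$ et à une borne sur les nombres de Betti $\dim H^i_c(X,\Qq_\ell)$ ne dépendant que de la complexité des équations (à la Katz, ou par fibration et récurrence), ceci donne l'estimation $O(q^{d-1/2})$ avec constante uniforme.

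Le principal obstacle est l'étape cohomologique: la quasi-finitude (plutôt que finitude) de $p_2$ empêche l'application directe de la formule de Lefschetz--Verdier, $C$ n'étant pas propre sur $X^{(q)}$. Il faut soit étendre $C$ à une compactification bien choisie en contrôlant, par récurrence sur la dimension, les contributions de bord, soit invoquer les raffinements de la formule dus à Fujiwara et Varshavsky pour correspondances ouvertes à points fixes isolés. Le calcul explicite du coefficient $\deg(p_1)/\degins(p_2)$ en degré top demande par ailleurs un traitement soigneux du Frobenius relatif; c'est précisément là que réside l'ingrédient géométrique nouveau de \textcite{ShuVar}.
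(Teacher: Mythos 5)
Votre stratégie générale (formule des traces, isolation du terme de poids maximal, bornes de poids de Deligne) est bien dans l'esprit de la preuve de Shuddhodan--Varshavsky exposée ici, et vous avez correctement identifié que la non-propreté de \(p_2\) est l'obstacle principal à une application directe de la formule de Lefschetz--Verdier. Toutefois, plusieurs points laissent des lacunes réelles.

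Le problème le plus sérieux est que vous passez sous silence la difficulté centrale de la preuve : quand \(X\) et \(C\) ne sont pas définies sur un sous-corps fini, \(\phi_{q,X}^\star\) n'est pas un endomorphisme de \(H^i\) --- c'est un morphisme \(H^i(X^{(q)},\Qq_\ell)\to H^i(X,\Qq_\ell)\) --- et le théorème de pureté de Deligne, que vous invoquez, ne donne aucune borne sur les valeurs propres de \(\phi_{q,X}^\star\circ H^i([C])\). C'est précisément pour contourner cet obstacle que Hrushovski introduit une norme sur les cycles, sous-multiplicative par composition et majorant le rayon spectral, ce qui permet de se ramener au cas \(K=\alg{\Ff}_q\) puis, par itération, au cas où \(\phi_{q^r,X}^\star\) est bien un endomorphisme. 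Le cas d'un corps \(K\) quelconque s'obtient ensuite par spécialisation. Sans cet ingrédient (ou un substitut), votre estimation des termes \(i<2d\) ne peut aboutir.

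Deux autres points méritent d'être précisés. D'une part, << un éclatement approprié pour se ramener au cas lisse >> ne suffit pas en caractéristique positive : la preuve utilise le théorème d'altérations de de Jong pour obtenir une modification lisse avec bord à croisements normaux, et c'est une altération (génériquement finie, non birationnelle), ce qui introduit le facteur \(\deg(f)\) dans le calcul. D'autre part, le traitement du bord dans la preuve suivie ici n'est pas une simple récurrence sur la dimension : il passe par l'éclatement à la Pink de \(\oX\times\oX^{(q)}\) le long des \(X_i\times X_i^{(q)}\), une formule d'inclusion--exclusion pour les nombres d'intersection sur l'éclaté, et une hypothèse de \(\oC\)-invariance locale du bord qu'il faut d'abord assurer par un éclatement préliminaire. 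Votre alternative (raffinements de Fujiwara--Varshavsky pour correspondances ouvertes) est effectivement une approche possible et apparentée --- Varshavsky est l'un des auteurs suivis ici --- mais elle diverge sensiblement de la preuve géométrique présentée dans ce texte. Enfin, votre description du calcul du coefficient dominant est imprécise : dans cette preuve, le terme de degré \(2d\) donne \(\deg(p_1)q^d\) sur la variété altérée, et c'est le facteur \(\deg(f)\degins(p_2)\) issu de la construction du cycle \(\tilde\alpha\) sur l'éclaté qui, une fois simplifié, produit le quotient \(\deg(p_1)/\degins(p_2)\).
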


\begin{exem}
Dans le cas où \(X\) est définie sur \(\Ff_q\), on identifie \(X\) à \(X^{(q)}\). Soit alors \(C = \Delta\subseteq X\times X\) la diagonale, on a
\[\Delta\cap\Gamma_{q,X}(K) = \{(x,x) : x\in X(K) \text{ et } x = \phi_q(x)\}.\]
Comme dans ce cas-là \(\deg(p_1) = \degins(p_2) = 1\), on retrouve les estimées de Lang--Weil:
\[\# X(\Ff_q) = q^d + O(q^{d-1/2}).\]
\end{exem}

\begin{exem}
Considérons l'exemple, très simple, où \(X\) est la droite affine \(\Aa_K^1\) et \(C \subseteq \Aa_K^2\) est l'ensemble \(\{(x,y) : x^m = y^{n}\}\) pour un choix d'entiers \(m\) et \(n\). Dans ce cas-là, on a \(\deg(p_1) = n\) et \(\degins(p_2) = p^r\) où \(r\) est l'exposant de \(p\) dans la décomposition de \(m\) en facteurs premiers --- et donc \(m = p^r s\) où \(s\) est un entier premier à \(p\). Alors, si \(q \geq p^r\),
\begin{align*}
\# C\cap \Gamma_{q,X}(K) & = \#\{x \in K : x^{p^r s}= x^{qn}\}\\
&= \#\{x \in K : x^{p^r(p^{-r}qn -s)} = 1\text{ ou }x=0\}\\
&= \frac{n}{p^r} q - s +1.
\end{align*}
\end{exem}

Des estimations quantitatives du théorème~\ref{Twist LW}, on déduit aisément l'énoncé qualitatif suivant, qui n'est pas sans rappeler la condition~\ref{ACFA geom} de la proposition~\ref{ACFA}:
\begin{coro}\label{Twist LW qual}
Soient \(X\) une variété sur \(K\) et \(C \subseteq X\times X^{(q)}\) une sous-variété telle que les projections \(p_1 \colon C \to X\) et \(p_2 \colon C \to X^{(q)}\) sont dominantes. Alors pour tout ouvert de Zariski \(U\subseteq C\) non-vide, si \(q\) est suffisamment grand, \[U\cap\Gamma_{q,X}(K)\neq \emptyset.\]
\end{coro}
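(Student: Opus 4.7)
L'idée est d'appliquer le théorème~\ref{Twist LW} à un sous-ouvert de $U$ satisfaisant toutes ses hypothèses: le terme principal $\frac{\deg(p_1)}{\degins(p_2)} q^d$ étant strictement positif, l'estimation asymptotique garantira alors que le cardinal est non-nul pour $q$ assez grand. L'unique obstruction est l'hypothèse de quasi-finitude de $p_2$ dans le théorème, qui impose en particulier $\dim C = d$.

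Lorsque $\dim C = d$, la semi-continuité de la dimension des fibres entraîne l'existence d'un ouvert dense $V \subseteq C$ sur lequel $p_2$ est quasi-finie. L'intersection $U \cap V$ reste un ouvert non-vide de $C$ par irréductibilité, sur lequel les deux projections sont dominantes et $p_2$ est quasi-finie: on peut donc lui appliquer le théorème~\ref{Twist LW} et conclure.

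Sinon, $r := \dim C - d > 0$, et je me ramènerais au cas précédent en réduisant la dimension par une coupe à la Bertini. Fixons un plongement projectif de $X \times X^{(q)}$, par exemple via Segre, et choisissons $r$ hyperplans généraux $H_1,\ldots,H_r$; alors $C' := \overline{C} \cap H_1 \cap \cdots \cap H_r$ reste irréductible de dimension $d$ lorsque $d \geq 1$, par applications successives du théorème de Bertini --- le cas $d = 0$ étant immédiat puisque $X$ est alors un point. Les fibres génériques de $p_1$ et $p_2$ sur $\overline C$ étant de dimension $r$, la généricité des hyperplans préserve la dominance de ces projections sur $C'$, et $U' := U \cap C'$ reste un ouvert non-vide de $C'$ pour un choix générique.

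Il suffit alors d'appliquer le premier cas à $C'$ et $U'$: on obtient un sous-ouvert $U'' \subseteq U' \subseteq U$ satisfaisant aux hypothèses du théorème~\ref{Twist LW}, ce qui conclut. Le point qui me semble demander le plus d'attention est la vérification que la coupe hyperplane générique préserve \emph{simultanément} l'irréductibilité, la dominance des deux projections et la non-vacuité de $U \cap C'$, en particulier dans les cas de petite dimension où le théorème de Bertini exige quelques précautions supplémentaires.
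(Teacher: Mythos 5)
Votre argument est correct et constitue la réduction naturelle; le texte ne donne pas de preuve explicite de ce corollaire, affirmant seulement qu'il se déduit \emph{aisément} du théorème quantitatif. Le cas $\dim C = d$ est en effet immédiat: on se restreint à l'ouvert dense où $p_2$ est quasi-finie, on intersecte avec $U$, et on applique le théorème~\ref{Twist LW}, la constante dominante $\deg(p_1)/\degins(p_2)$ étant un rationnel strictement positif. Pour l'étape à la Bertini lorsque $r := \dim C - d > 0$, les trois points que vous signalez se vérifient bien: pour $d \geq 1$, chacune des $r$ sections hyperplanes génériques successives de $\overline{C}$ est prise sur une variété de dimension au moins $d+1 \geq 2$, donc le théorème de Bertini pour l'irréductibilité (valable sur tout corps algébriquement clos) s'applique à chaque étape; les fibres génériques de $p_1$ et $p_2$ au-dessus de $\overline{C}$ étant de dimension $r$, une section linéaire générique de codimension $r$ les rencontre encore, de sorte que les deux projections restent dominantes et génériquement finies; enfin $U \cap C' \neq \emptyset$ car $\overline{C}\setminus U$ est de dimension strictement inférieure à $d + r$, donc sa section générique de codimension $r$ est de dimension strictement inférieure à $d = \dim C'$. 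Deux remarques mineures: le premier cas est le sous-cas $r=0$ du second, la dichotomie n'est donc pas indispensable; et, les hyperplans étant linéaires, les degrés de la tranche et de son bord sont contrôlés par ceux de $\overline{C}$ et $\overline{C}\setminus C$, ce qui garantit l'uniformité de la borne sur $q$ que le texte affirme juste après le corollaire.
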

La borne sur \(q\) ne dépend, de nouveau, que de la complexité des équations qui définissent \(X\), \(C\) et \(U\) --- en particulier, elle ne dépend ni de \(K\), ni de sa caractéristique.\medskip

Dans le cas où \(X\) est définie sur \(\Ff_q\) et \(K = \alg{\Ff}_q\) (et donc \(X^{(q)}\) est naturellement identifié à \(X\)), ces résultats sont mieux compris. Par exemple, \textcite{Shu-Frob} démontre des résultats plus fins sur le comportement de la suite des \(\# C\cap\Gamma_{q^n,X}(K)\) quand \(n\) croît :

\begin{theo}
Soient \(X\) une variété sur \(\alg{\Ff}_q\) définie sur \(\Ff_q\) et \(C \subseteq X\times X\) une sous-variété telle que \(p_2\) est quasi-finie. Pour tout \(n\) suffisamment grand, \(a_n = \# C\cap\Gamma_{q^n,X}(\alg{\Ff}_q)\) est fini et la série entière
\[\sum_{n} a_n t^n \in \Zz[[t]]\]
est rationnelle --- c'est-à-dire un élément de \(\Qq(t)\).
\end{theo}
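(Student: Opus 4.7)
On interprète cohomologiquement \(a_n\) par la formule des traces de Grothendieck--Lefschetz pour les correspondances (conjecture de Deligne, démontrée par Fujiwara après Pink, puis reprise par Varshavsky), et on en déduit la rationalité à partir de la finitude des dimensions des groupes de cohomologie \(\ell\)-adique.

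\textbf{Étape 1 : interprétation cohomologique.} Fixons un nombre premier \(\ell\neq p\), et considérons les groupes de cohomologie \(\ell\)-adique à support compact \(H^i_c(X_{\alg{\Ff}_q},\Qq_\ell)\) ; ils sont de dimension finie et nuls pour tout \(i\) en dehors d'un ensemble fini. Puisque \(X\) est définie sur \(\Ff_q\), le Frobenius géométrique \(F = \phi_{q,X}\) induit un endomorphisme \(F_i\) de chaque \(H^i_c\). La sous-variété \(C\), vue comme correspondance avec \(p_2\) quasi-finie, induit de même un endomorphisme \(C_i^* = p_{1!}\, p_2^*\) sur \(H^i_c\).

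\textbf{Étape 2 : formule des traces et rationalité.} Par le théorème de Fujiwara, il existe un entier \(n_0\) tel que, pour tout \(n\geq n_0\),
\[a_n = \sum_i (-1)^i \operatorname{tr}\bigl(F_i^n\circ C_i^* \bigm| H^i_c(X_{\alg{\Ff}_q},\Qq_\ell)\bigr),\]
ce qui entraîne notamment la finitude de l'intersection \(C \cap \Gamma_{q^n,X}(\alg{\Ff}_q)\). Pour chaque \(i\), le théorème de Cayley--Hamilton appliqué à \(F_i\) fournit une récurrence linéaire à coefficients constants pour la suite \((F_i^n)_{n\geq 0}\). En composant à droite par l'opérateur fixe \(C_i^*\) et en prenant la trace, la suite \(\operatorname{tr}(F_i^n C_i^*)\) satisfait la même récurrence, et sa série génératrice appartient donc à \(\Qq_\ell(t)\). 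En sommant sur les indices \(i\) pertinents (en nombre fini) et en ajustant un polynôme pour les valeurs \(n<n_0\), on obtient \(\sum_n a_n t^n \in \Qq_\ell(t)\). Enfin, comme \(a_n\in\Zz\), un argument classique sur les déterminants de Hankel --- la rationalité se teste par l'annulation de mineurs, condition indépendante du corps de base --- permet de conclure que \(\sum_n a_n t^n\in \Qq(t)\).

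\textbf{Principal obstacle.} Le cœur de la preuve est l'étape 2, où intervient la formule des traces pour les correspondances composées avec une grande puissance du Frobenius --- un résultat profond dû à Fujiwara (après Deligne et Pink) et à Varshavsky. Une fois cette formule disponible, la rationalité en découle par une simple application de l'algèbre linéaire aux endomorphismes d'espaces vectoriels de dimension finie.
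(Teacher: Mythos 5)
Le texte en question ne démontre pas ce théorème : il le cite en l'attribuant à Shuddhodan et écrit aussitôt après <<~Mais nous n'aborderons pas ici ces raffinements~>>. Il n'y a donc pas de preuve dans l'article à laquelle comparer la vôtre, et l'évaluation qui suit porte uniquement sur le fond.

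Votre stratégie est dans l'esprit de la preuve de Shuddhodan : le théorème de Fujiwara (réponse à la conjecture de Deligne, voir aussi Pink et Varshavsky) donne, pour \(n\) assez grand, la formule \(a_n = \sum_i (-1)^i \Tr(F^n\circ [C]\mid H^i_c)\) ainsi que la finitude de l'intersection ; Cayley--Hamilton appliqué à \(F\) sur chaque \(H^i_c\), de dimension finie, montre que la suite \(\Tr(F^n\circ [C])\) satisfait une récurrence linéaire à coefficients dans \(\Qq_\ell\), donc a une série génératrice dans \(\Qq_\ell(t)\) ; et le critère de Kronecker via les déterminants de Hankel ramène la rationalité à \(\Qq\) puisque les \(a_n\) sont entiers. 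Ce sont bien les trois piliers de l'argument.

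Une réserve tout de même : l'opérateur que vous écrivez, \(p_{1!}\circ p_2^*\) sur \(H^i_c\), n'est pas défini tel quel. En cohomologie à supports compacts, le tiré en arrière \(p_2^*\) n'existe pas pour un morphisme quelconque, et \(p_{1!}\) suppose au minimum que \(p_1\) soit propre. Dans le cadre de Deligne--Fujiwara, on travaille avec une compactification de la correspondance et la donnée d'une correspondance cohomologique \(u\colon p_2^*\mathcal{F}\to p_1^!\mathcal{F}\), ce qui rend \(p_1\) propre ; la quasi-finitude de \(p_2\), elle, n'intervient pas pour construire l'opérateur mais pour contrôler les termes locaux au bord après torsion par une grande puissance de Frobenius, et garantir ainsi la finitude des points fixes pour \(n\gg 0\). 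Cette étape de compactification et la vérification que le terme local naïf coïncide avec le vrai forment le cœur technique de la preuve ; votre esquisse, correcte dans ses grandes lignes, les traite en boîte noire, ce qui est acceptable pour une esquisse mais mérite d'être signalé.
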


Mais nous n'aborderons pas ici ces raffinements.

\subsection{Le cas projectif lisse}

Pour démontrer le théorème~\ref{Twist LW}, nous allons tout d'abord nous concentrer sur le cas où \(X\) est lisse projective et \(p_2\) est étale. 
Cela nous permettra d'introduire certains des principaux outils de la preuve.

\medskip

À toute variété projective lisse \(X\) de dimension \(d\) et tout \(n\leq d\), on associe le groupe \(Z^n(X)\) des cycles de \(X\) de codimension \(n\). C'est le groupe abélien libre engendré par les sous-variétés de \(X\) de codimension \(n\).

Étant donné deux sous-variétés \(Y_1\) et \(Y_2\) de \(X\) de codimensions respectives \(n\) et \(d-n\), on souhaite leur associer un entier qui représente le nombre de points dans leur intersection. Quand cette intersection est finie, il est naturel de considérer le nombre de ses points (comptés avec multiplicité).

En général, on s'autorise à d'abord << déplacer >> \(Y_1\) et \(Y_2\) pour que leur intersection soit de dimension zéro. On obtient alors un produit d'intersection \[\cdot \colon A^{n}(X) \times A^{d-n}(X) \to \Zz,\] où \(A^n(X)\) est le quotient de \(Z^n(X)\) par équivalence rationnelle. 
Pour toute sous-variété \(Y \subseteq X\) de codimension \(n\), on note \([X]\) sa classe dans  \(A^n(X)\).

Dans le cas qui nous intéresse, comme \(p_2\) est étale, les sous-variétés \(C\) et \(\Gamma_{q,X}\) de \(X\times X^{(q)}\) s'intersectent transversalement --- c'est-à-dire que l'intersection est finie et que chaque point est de multiplicité égale à un --- et donc
\begin{equation}
\# C\cap\Gamma_{q,X} (K) = [C]\cdot[\Gamma_{q,X}].
\end{equation}

L'intérêt de se ramener au calcul d'un nombre d'intersection est que l'on dispose alors d'outils cohomologiques.

Soit \(\ell\) un nombre premier différent de \(p\). À toute variété (propre et lisse) \(X\), on associe ses groupes de cohomologie \(\ell\)-adique \(H^i(X,\Qq_\ell)\) --- ce sont des \(\Qq_\ell\)-espaces vectoriels de dimension finie \(\beta_i(X)\). De plus, pour tout morphisme de variétés (propres et lisses) \(f \colon X \to Y\), on a un morphisme de tiré en arrière \(f^\star \colon H^i(Y,\Qq_\ell)\to H^i(X,\Qq_\ell)\). Si \(\dim(Y) = \dim(X)\), on a aussi un morphisme de poussé en avant  \(f_\star \colon H^i(X,\Qq_\ell)\to H^{i}(Y,\Qq_\ell)\).
On peut étendre cette fonctorialité aux sous-variétés irréductibles \(C\subseteq X\times Y\) de même dimension que \(Y\) --- et donc de codimension \(\dim(X)\) --- en posant
\[H^i([C]) = (p_2)_\star \circ p_1^\star \colon H^i(X,\Qq_\ell) \to H^i(C,\Qq_\ell) \to H^i(Y,\Qq_\ell),\]
où \(p_1 \colon C \to X\) et \(p_2 \colon C \to Y\) sont les projections. On étend \(H^i\) à tous les cycles de codimension \(\dim(X)\) par linéarité.

On peut alors écrire la formule de Grothendieck-Lefschetz, voir par exemple \textcite[proposition~3.3]{SGA4h}, qui relie le nombre d'intersection à la trace de divers morphismes de cohomologie. Pour tout \(\alpha \in A^d(X\times X^{(q)})\), on a

\begin{equation}\label{GroLef}
\alpha\cdot[\Gamma_{q,X}] = \sum_{i=0}^{2d} (-1)^i \Tr(\phi_{q,X}^{\star} \circ H^i(\alpha)).
\end{equation}

Il nous faut donc estimer chacun des termes de la somme dans le cas \(\alpha = [C]\). Pour ce qui est du degré \(i = 2d\), \(H^{2d}(X)\) est de dimension \(1\) et on peut calculer explicitement, par exemple en considérant l'image d'un cycle de dimension zéro, que 
\[\phi_{q,X}^\star \circ H^{2d}([C]) = \phi_{q,X}^\star \circ (p_2)_\star \circ p_1^\star = q^d \deg(p_1) \id\]
et donc:
\begin{equation}\label{top dim}
\Tr(\phi_{q,X}^\star \circ H^{2d}([C])) = \deg(p_1) q^{d}.
\end{equation}

Il reste donc à montrer que les contributions des termes de degré \(i < 2d\) sont négligeables. On fixe dorénavant un plongement \(\iota \colon \alg{\Qq}_\ell \to \Cc\) et on identifie \(\alg{\Qq}_\ell\) à un sous-corps de \(\Cc\). Pour tout \(f \colon H^i(X,\Qq_\ell) \to H^i(X,\Qq_\ell)\), on note \(\rho(f)\) --- ou \(\rho(f,H^i(X,\Qq_\ell))\) s'il peut y avoir une ambiguïté --- son rayon spectral (archimédien). C'est le maximum des normes des valeurs propres de \(f\). On a alors
\[|\Tr(\phi_{q,X}^\star \circ H^{i}([C]))| \leq \beta_i(X) \rho(\phi_{q,X}^\star \circ H^{i}([C]))  = O(\rho(\phi_{q,X}^\star \circ H^{i}([C]))),\]
où \(\beta_i(X)\) est la dimension de \(H^i(X,\Qq_\ell)\).

Si \(X\) et \(C\) sont définis sur \(\Ff_q\), on peut naturellement identifier \(X^{(q)}\) à \(X\) et \(\phi_{q,X}^\star\) commute alors avec \(H^i([C])\). On a donc \(\rho(\phi_{q,X}^\star \circ H^{i}([C])) \leq \rho(\phi_{q,X}^\star,H^i(X,\Qq_\ell))\rho(H^{i}[C])\) et le théorème~\ref{Twist LW} découle alors du théorème de pureté de \textcite{Del-Weil}:

\begin{theo}
Pour toute variété projective lisse \(X\) de dimension \(d\) définie sur \(\Ff_q\) et tout \(i\leq 2d\), les valeurs propres de \(\phi_{q,X}^\star\) sur \(H^i(X,\Qq_\ell)\) sont des nombres algébriques de norme complexe \(q^{i/2}\). En particulier,
\[\rho(\phi_{q,X}^\star,H^i(X,\Qq_\ell)) = q^{i/2}.\]
\end{theo}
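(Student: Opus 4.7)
La preuve complète est due à \textcite{Del-Weil} et ferait l'objet d'un exposé à part entière; on peut toutefois en résumer la stratégie en trois temps.

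D'abord, la formule des traces de Grothendieck-Lefschetz,
\[\# X(\Ff_{q^n}) = \sum_{i=0}^{2d} (-1)^i \Tr((\phi_{q,X}^\star)^n, H^i(X,\Qq_\ell)),\]
jointe à la rationalité de la fonction zêta (déjà établie par Dwork et Grothendieck), montre que les valeurs propres de $\phi_{q,X}^\star$ sont des nombres algébriques. La dualité de Poincaré fournit par ailleurs un appariement $\alpha \leftrightarrow q^d/\alpha$ entre les valeurs propres en degrés $i$ et $2d-i$, de sorte qu'il suffit d'établir la majoration $|\alpha| \leq q^{i/2}$ pour toute valeur propre sur $H^i(X,\Qq_\ell)$, la borne inverse s'en déduisant par dualité.

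Ensuite, on procède par récurrence sur la dimension $d$ grâce à la théorie des pinceaux de Lefschetz. Quitte à éclater un centre lisse de codimension $2$, on peut supposer que $X$ admet un morphisme $f \colon X \to \Pp^1$ dont la fibre générique est une section hyperplane lisse de dimension $d-1$, et dont les fibres singulières ne présentent que des singularités quadratiques ordinaires isolées. Par la suite spectrale de Leray associée à $f$ et l'hypothèse de récurrence appliquée aux fibres lisses, le problème se ramène à contrôler les valeurs propres de Frobenius sur la cohomologie $H^1(U,\mathcal{F})$ d'un ouvert $U\subseteq \Pp^1$ à coefficients dans le faisceau lisse $\mathcal{F}$ des cycles évanescents.

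Le dernier ingrédient, véritable cœur de la preuve, est l'\emph{astuce des puissances paires} inspirée de la théorie des fonctions $L$ de Rankin-Selberg. En appliquant une majoration grossière aux puissances tensorielles $\mathcal{F}^{\otimes 2k}$ puis en utilisant l'intégralité algébrique des valeurs propres pour extraire la racine $2k$-ième et faire tendre $k$ vers l'infini, on obtient la borne fine souhaitée. L'obstacle principal, qui constitue l'apport décisif de Deligne, est alors d'établir que la monodromie géométrique agit de manière suffisamment grosse --- c'est-à-dire irréductiblement, via un groupe algébrique classique --- sur les puissances tensorielles de $\mathcal{F}$ pour que l'argument d'autodualité puisse s'appliquer. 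Ce contrôle repose sur une étude fine des transvections de Picard-Lefschetz et du groupe algébrique qu'elles engendrent.
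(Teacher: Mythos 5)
Le texte ne démontre pas ce théorème : il le cite comme un résultat admis, attribué à \textcite{Del-Weil}, et l'utilise comme une boîte noire dans la preuve du théorème~\ref{Twist LW}. Votre tentative est donc, par la force des choses, d'une nature différente : c'est une esquisse de la preuve de Deligne elle-même, absente du texte.

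Comme résumé de la preuve de Deligne, votre plan en trois temps est globalement fidèle --- réduction par dualité de Poincaré et théorème de Lefschetz faible à la cohomologie médiane, pinceaux de Lefschetz, puis astuce des puissances tensorielles paires à la Rankin combinée à la grosse monodromie (Kazhdan--Margulis, Picard--Lefschetz). Deux imprécisions méritent toutefois d'être relevées. D'une part, l'algébricité des valeurs propres sur chaque $H^i$ ne suit pas directement de la rationalité de la fonction zêta : celle-ci ne porte que sur le produit alterné des polynômes caractéristiques $P_i$, et des annulations entre degrés distincts ne sont a priori pas exclues. C'est précisément la pureté, c'est-à-dire le résultat visé, qui écarte ces annulations et permet de conclure que chaque $P_i$ est à coefficients entiers ; présenté tel quel, ce premier pas est donc légèrement circulaire. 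D'autre part, la réduction que Deligne effectue en pratique n'est pas directement à la borne fine $|\alpha|\leq q^{i/2}$, mais à une borne plus grossière du type $|\alpha|\leq q^{(i+1)/2}$ sur la cohomologie médiane ; la borne fine s'obtient ensuite en passant aux puissances $X^n$ via la formule de Künneth et en laissant $n$ tendre vers l'infini.
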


Dans le cas où \(X\) n'est pas définie sur \(\Ff_q\), \textcite{Hru-Frob} introduit un dernier ingrédient: une norme sur les cycles.

\begin{defi}
Soit \(\alpha\) un élément de \(A^\star(X\times X^{(q)})\). On définit
\[|\alpha| = \min_{\sum_i a_i [Y_i] = \alpha} \sum_i |a_i| \deg(Y_i)\]
et
\[|H^i(\alpha)| = \min_{H^i(\gamma) = H^i(\alpha)} |\gamma|.\]
\end{defi}

L'intérêt de cette norme est qu'elle est sous-multiplicative pour la composition (à renormalisation près) et qu'elle permet de borner le rayon spectral.

\begin{prop}[{\textcite[lemme~10.17.3]{Hru-Frob}}]
Pour toutes variétés projectives lisses \(X_1\), \(X_2\) et \(X_3\) de dimension \(d\) et tout \(\alpha_1 \in A^d(X_1\times X_2)\) et \(\alpha_2 \in A^d(X_2\times X_2)\),
\[|H^i(\alpha_2) \circ H^i(\alpha_1)| = O(|H^i(\alpha_1)|\cdot |H^i(\alpha_2)|),\]
où les bornes ne dépendent que du degré des variétés \(X_1\), \(X_2\) et \(X_3\).
\end{prop}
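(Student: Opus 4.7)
La stratégie est de ramener la sous-multiplicativité de \(|H^i(\cdot)|\) à une inégalité de type Bézout sur le degré de la composition des correspondances à l'intérieur de la variété projective lisse ambiante \(X_1\times X_2\times X_3\). On rappelle pour cela que, si \(p_{ij}\) désignent les projections depuis \(X_1\times X_2\times X_3\), la composition de cycles
\[\gamma_2 \circ \gamma_1 := (p_{13})_\star\bigl(p_{12}^\star \gamma_1 \cdot p_{23}^\star \gamma_2\bigr) \in A^d(X_1\times X_3)\]
induit par fonctorialité la composition correspondante en cohomologie, \emph{i.e.}, \(H^i(\gamma_2\circ\gamma_1) = H^i(\gamma_2)\circ H^i(\gamma_1)\).

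Pour tout \(\varepsilon > 0\), on choisit alors des représentants \(\gamma_1 = \sum_j a_j[Y_j]\in Z^d(X_1\times X_2)\) et \(\gamma_2 = \sum_k b_k[Z_k]\in Z^d(X_2\times X_3)\) (avec \(Y_j\) et \(Z_k\) irréductibles) vérifiant \(H^i(\gamma_t) = H^i(\alpha_t)\) et \(\sum_j |a_j|\deg(Y_j) \leq |H^i(\alpha_1)| + \varepsilon\), et analogue pour \(\gamma_2\). Par bilinéarité de la composition des correspondances et l'inégalité triangulaire,
\[|\gamma_2\circ\gamma_1| \leq \sum_{j,k} |a_j b_k|\,\deg\bigl([Z_k]\circ[Y_j]\bigr).\]
En faisant tendre \(\varepsilon\) vers \(0\), la conclusion découlera donc de l'estimation clé
\[\deg\bigl([Z_k]\circ [Y_j]\bigr) = O\bigl(\deg(Y_j)\cdot\deg(Z_k)\bigr),\]
où la constante implicite ne dépend que des plongements projectifs fixés sur \(X_1\), \(X_2\), \(X_3\).

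La démonstration de cette borne bézoutienne constitue le cœur technique de la proposition et, à mon avis, le principal obstacle. La formule de projection, jointe au plongement de Segre de \(X_1\times X_2\times X_3\), montre d'abord que les degrés des tirés en arrière \(p_{12}^\star[Y_j]\) et \(p_{23}^\star[Z_k]\) sont contrôlés --- à constantes près ne dépendant que des \(X_i\) --- par ceux de \(Y_j\) et \(Z_k\). Sur la variété projective lisse \(X_1\times X_2\times X_3\), on dispose alors d'une inégalité de Bézout: le degré du produit d'intersection \(p_{12}^\star[Y_j]\cdot p_{23}^\star[Z_k]\) est majoré, à constante multiplicative près ne dépendant que de la variété ambiante, par le produit des degrés des facteurs. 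Cela peut se voir par récurrence sur les sections hyperplanes, \emph{via} l'interprétation classique du degré comme intégrale de la classe hyperplane élevée à la puissance adéquate. Enfin, le poussé en avant par le morphisme propre \(p_{13}\) ne fait qu'abaisser le degré du cycle résultant, à constante près liée au plongement de \(X_1\times X_3\). La subtilité principale se résume donc à un contrôle uniforme du passage entre les plongements projectifs des produits et ceux de leurs facteurs, ce qui revient essentiellement à un énoncé quantitatif de la formule de Künneth au niveau des degrés.
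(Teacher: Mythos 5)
Le texte de l'exposé ne démontre pas cette proposition : il renvoie explicitement au lemme 10.17.3 de l'article de Hrushovski, si bien qu'une comparaison terme à terme avec «~la preuve du papier~» n'est pas possible ici. Votre stratégie — fixer des représentants quasi-optimaux \(\gamma_1,\gamma_2\), développer par bilinéarité et se ramener à une estimation bézoutienne sur la composition de correspondances irréductibles — est la réduction naturelle et c'est très vraisemblablement la bonne.

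En revanche, l'étape clé que vous isolez, l'inégalité \(\deg([Z_k]\circ[Y_j]) = O(\deg(Y_j)\deg(Z_k))\), est justifiée de manière trop expéditive et laisse une vraie lacune. D'abord, il y a une confusion entre \emph{degré} d'une classe et \emph{norme} d'une classe : ce qu'il faut borner, c'est \(|[Z_k]\circ[Y_j]|\), c'est-à-dire exhiber un représentant \(\sum a_i[W_i]\) avec \(\sum|a_i|\deg(W_i)\) contrôlé. Pour un cycle effectif, degré et norme coïncident bien, mais le produit d'intersection \(p_{12}^\star[Y_j]\cdot p_{23}^\star[Z_k]\) est a priori une classe de \(A^{2d}(X_1\times X_2\times X_3)\); quand l'intersection ensembliste \(Y_j\times_{X_2}Z_k\) a une dimension excédentaire, le représentant donné par l'intersection raffinée de Fulton n'est pas effectif, et le lemme de déplacement de Chow ne préserve pas l'effectivité non plus. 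Il faut donc un argument pour produire un représentant effectif de degré contrôlé, et votre preuve n'en donne pas.

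Ensuite, même sur les cycles effectifs, l'énoncé «~le degré du produit d'intersection est majoré par le produit des degrés, à constante près~» n'est pas un énoncé de Bézout standard hors de \(\Pp^N\). La «~récurrence sur les sections hyperplanes~» que vous invoquez démontre Bézout dans \(\Pp^N\) parce que l'anneau de Chow y est engendré par la classe hyperplane; sur une variété projective lisse quelconque ce n'est plus le cas, et la même récurrence ne se propage pas. La borne existe — elle découle, selon la codimension, d'inégalités de type Hodge–Khovanskii–Teissier, ou bien du théorème de Bézout raffiné de Fulton appliqué après plongement de Segre dans \(\Pp^N\), combiné à une comparaison soigneuse entre l'intersection calculée dans \(X_1\times X_2\times X_3\) et celle calculée dans \(\Pp^N\) — mais aucun de ces arguments n'est contenu dans ce que vous écrivez. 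C'est précisément là, comme vous le pressentez vous-même, que gît toute la difficulté technique, et la preuve telle quelle ne la résout pas.
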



Quitte à ne considérer dorénavant que des variétés de degré borné, on définit
\[||H^i(\alpha)|| = N |H^i(\alpha)|,\]
avec \(N\) suffisamment grand pour que cette norme soit sous-multiplicative. \textcite[proposition~11.11]{Hru-Frob} démontre alors que, pour tout \(\alpha \in A^d(X\times X)\),
\[\rho(H^i(\alpha))\leq ||H^i(\alpha)||.\]


On trouve aussi une présentation alternative de ces résultats dans \textcite[appendice~B]{ShuVar}.\medskip

Nous pouvons maintenant conclure la preuve du théorème~\ref{Twist LW} dans le
cas où \(K = \alg{\Ff}_q\). Dans ce cas, il existe un entier \(r\) tel que \(X\)
et \(C\) soient définis sur \(\Ff_{q^r}\). On peut alors identifier
\(X^{(q^r)}\) à \(X\) et \(C^{(q^r)}\) à \(C\) et on vérifie que
\[\left(\phi_{q,X}^\star\circ H^i([C])\right)^r = \phi_{q^r,X}^\star \circ h = h \circ  \phi_{q^r,X}^\star,\]
où \(h= H^i([C^{(q^{r-1})}]) \circ\cdots\circ H^i([C]) = H^i([C^{(q^{r-1})}] \circ\cdots\circ [C])\), et donc
\[\rho(\phi_{q,X}^\star\circ H^i([C]))^r = \rho(\phi_{q^r,X}^\star \circ h) = \rho(h) q^{ir/2} \leq ||H^i([C])||^{r} q^{ir/2}.\]
ce qui conclut la preuve quand \(K = \alg{\Ff}_q\). Le cas général s'en déduit par un argument de spécialisation.\medskip

Par linéarité, nous avons, en fait, prouvé une estimation de la trace pour tous les cycles de dimension \(d\), qui sera utile par la suite. Nous rappelons que nous avons identifié \(\alg{\Qq}_\ell\) à un sous-corps de \(\Cc\).

\begin{prop}
Pour toute variété projective lisse \(X\) sur \(K\), tout \(\alpha\in A^d(X\times X^{(q)})\) et tout \(i\leq 2d\),
\begin{equation}\label{approx trace}
\Tr(\phi_{q,X}^\star\circ H^i(\alpha)) = O(q^{i/2}).
\end{equation}
\end{prop}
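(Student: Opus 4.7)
Le plan est de déduire cette estimation de la preuve du théorème~\ref{Twist LW} par un simple argument de linéarité.

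Écrivons d'abord \(\alpha = \sum_{j=1}^N a_j [Y_j]\) avec \(a_j \in \Zz\) et \(Y_j \subseteq X\times X^{(q)}\) sous-variétés irréductibles de codimension \(d\). Comme \(H^i\) et \(\Tr\) sont tous deux linéaires, on a
\begin{equation*}
\Tr(\phi_{q,X}^\star \circ H^i(\alpha)) = \sum_j a_j \Tr(\phi_{q,X}^\star \circ H^i([Y_j])),
\end{equation*}
et il suffit donc d'établir la borne \(|\Tr(\phi_{q,X}^\star \circ H^i([Y]))| = O(q^{i/2})\) pour toute sous-variété irréductible \(Y \subseteq X\times X^{(q)}\) de codimension \(d\).

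Or la preuve du théorème~\ref{Twist LW} fournit exactement cette borne. Pour \(i < 2d\), l'argument se résume à : spécialisation à \(K = \alg\Ff_q\), itération \((\phi_{q,X}^\star \circ H^i([Y]))^r = \phi_{q^r,X}^\star \circ h\) pour un composé \(h\) d'opérateurs \(H^i([Y^{(q^j)}])\), pureté de Deligne pour \(\phi_{q^r,X}^\star\) agissant sur \(H^i\) de la variété projective lisse \(X\), puis l'inégalité \(|\Tr(f)| \leq \beta_i(X)\rho(f)\). Aucune de ces étapes n'utilise d'hypothèse géométrique sur les projections de \(Y\) : elles ne dépendent que de \([Y]\) vu comme correspondance et de sa norme \(||H^i([Y])||\). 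Pour le degré maximal \(i = 2d\), le calcul direct sous-jacent à (\ref{top dim}) donne \(\phi_{q,X}^\star \circ H^{2d}([Y]) = q^d\deg(p_1)\,\id\) (avec la convention \(\deg(p_1) = 0\) si \(p_1\) n'est pas dominante), qui est déjà \(O(q^d) = O(q^{2d/2})\). Le passage de \(\alg\Ff_q\) à un \(K\) quelconque algébriquement clos s'effectue par l'argument de spécialisation déjà invoqué.

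La seule difficulté, mineure, consiste à vérifier \emph{par inspection} que les hypothèses géométriques du théorème~\ref{Twist LW} (dominance des projections et quasi-finitude de \(p_2\)) n'interviennent que pour identifier le cardinal \(\# C\cap\Gamma_{q,X}(K)\) au nombre d'intersection \([C]\cdot[\Gamma_{q,X}]\), et non pas dans les estimations cohomologiques de traces elles-mêmes, lesquelles s'appliquent à toute correspondance de la bonne codimension.
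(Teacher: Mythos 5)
Votre preuve est correcte et suit essentiellement la même voie que le texte : celui-ci se contente d'observer, juste après avoir traité le cas \(\alpha=[C]\), que « par linéarité, nous avons en fait prouvé une estimation de la trace pour tous les cycles de dimension \(d\) », ce qui est exactement l'argument que vous explicitez (décomposition \(\alpha=\sum_j a_j[Y_j]\), application des mêmes estimées cohomologiques à chaque \([Y_j]\), puis sommation). Vous avez également identifié le point clef, à savoir que les hypothèses de dominance et de quasi-finitude ne servent qu'à assurer l'égalité \(\#C\cap\Gamma_{q,X}(K)=[C]\cdot[\Gamma_{q,X}]\) et n'interviennent ni dans la formule de Grothendieck--Lefschetz, ni dans le calcul du terme de degré \(2d\) (avec la convention \(\deg(p_1)=0\) si \(p_1\) n'est pas dominante), ni dans la majoration par le rayon spectral et la norme de cycles.
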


\subsection{Un cas intermédiaire}

Pour introduire un dernier ingrédient important de la preuve, nous allons considérer le cas où \(X \subseteq \Pp_K^n\) est localement fermée et lisse. On suppose de plus que:
\begin{itemize}
\item la clôture \(\oX\) de \(X\) dans \(\Pp_K^n\) est lisse;
\item le bord \(\bd X = \oX\setminus X\) est une union finie de diviseurs lisses \((X_i)_{i\in I}\) à croisements normaux.
\end{itemize}

Pour tout \(J\subseteq I\), on note \(X_J = \bigcap_{i\in J} X_i\). 
En suivant une construction de \textcite{Pin}, on considère \(\pi \colon \tilde Y \to \oX\times\oX^{(q)}\) l'éclatement de \(\oX\times\oX^{(q)}\) le long de \(\bigcup_i X_i \times X_i^{(q)}\). Soit \(\tilde\Gamma\subseteq \tilde Y\) la clôture de \(\pi^{-1}(\Gamma_{q,X})\). Pour tout \(\tilde\alpha\in A^d(Y)\),  \textcite[lemme~2.3.1]{ShuVar} démontrent l'égalité suivante inspirée par des travaux de \textcite[proposition~IV.6]{Laf}:
\begin{equation}\label{incl excl}
\tilde\alpha\cdot [\tilde\Gamma] = \sum_{J\subseteq I} (-1)^{|J|} \alpha_J \cdot [\Gamma_{q,X_J}],
\end{equation}
où les \(\alpha_J \in A^{d-|J|}(X_J\times X_J^{(q)})\) sont construits explicitement, même si cela ne sera pas utile dans la suite de cet exposé (sauf dans le cas de \(\alpha_\emptyset\)).  Pour tous \(J\subseteq I\), soient \(E_J = \pi^{-1}(X_J\times X_J^{(q)})\), \(i_J \colon E_J \to Y\) l'inclusion et \(\pi_J = \restr{\pi}{E_J}\) la restriction. On a alors \(\alpha_J = (\pi_J)_\star i_J^\star(\tilde \alpha)\). En particulier, \(\alpha_\emptyset = \pi_\star(\tilde\alpha)\).

Soit \(\tilde C \subseteq \tilde Y\)  la clôture de \(\pi^{-1}(C)\) dans \(\tilde Y\). Pour conclure la preuve, il nous reste à relier \(\# C\cap\Gamma_{q,X}(K)\) à \([\tilde C]\cdot [\tilde\Gamma]\). Pour cela, il nous faut faire une dernière hypothèse sur~\(C\).

\begin{defi} Soient \(X\) une variété sur \(K\) et \(C\subseteq X\times X^{(q)}\) et \(Y\subseteq X\) des sous-variétés. On note \(p_1 \colon C \to X\) et \(p_2 \colon C \to X^{(q)}\) les projections.
\begin{itemize}
\item On dit que \(Y\) est \(C\)-invariante si \(p_1(p_2^{-1}(Y(K))) \subseteq Y(K)\).
\item On dit que \(Y\) est localement \(C\)-invariante si, pour tout \(x\in Y(K)\), il existe un voisinage ouvert (de Zariski) \(U \subseteq X\) de \(x\) tel que \(Y\cap U\) soit \(\restr{C}{U}\)-invariant, où \(\restr{C}{U} = C\cap (U\times U^{(q)})\).
\end{itemize}
\end{defi}

\begin{exem}
Si \(C\) est le graphe d'un morphisme \(f \colon X \to X^{(q)}\), une sous-variété \(Y\subseteq X\) est \(C\)-invariante si et seulement si \(f(Y(K))\subseteq Y(K)\).
\end{exem}

Soit \(\oC\) la clôture de \(C\) dans \(\oX\times \oX^{(q)}\). En supposant \(\partial X = \oX\setminus X\) localement \(\oC\)-invariant et \(p_2\) lisse, \textcite[lemme~2.3.2]{ShuVar} montrent que
\begin{align*}
\# C\cap\Gamma_{q,X}(K) &= [\tilde C]\cdot [\Gamma]\\
&= \sum_{J\subseteq I} (-1)^{|J|} [\tilde C]_J \cdot [\Gamma_{q,X_J}] &\text{par (\ref{incl excl})}\\
&= \sum_{J\subseteq I} (-1)^{|J|} \sum_{i=0}^{2(d-|J|)} (-1)^i \Tr(\phi_{q,X_J}^{\star} \circ H^i([\tilde C]_J)) &\text{par (\ref{GroLef})}\\
&= \Tr(\phi_{q,\oX}^\star\circ H^{2d}( [\tilde C]_\emptyset)) + \sum_{i < 2d} O(q^{i/2})&\text{par (\ref{approx trace}})\\
&= \deg(p_1) q^{d} + O(q^{d-1/2}),
\end{align*}
où la dernière égalité suit du fait que \([\tilde C]_{\emptyset} = [\oC]\) et de (\ref{top dim}).

\subsection{Le cas général}

Le cas général s'obtient (presque) par réduction au cas précédent. Comme
précédemment, on note \(\oX\) la clôture de Zariski de \(X\subseteq\Pp_K^n\) et
on note \(\oC\) la clôture de \(C\) dans \(\oX\times\oX^{(q)}\). Quitte à se
retreindre à des ouverts denses de \(X\), on se ramène au cas où,
\emph{cf.} \textcite[proposition~3.2.1]{ShuVar} :
\begin{itemize}
\item la variété \(X\) est lisse;
\item le bord \(\partial X = \oX\setminus X\) est localement \(\oC\)-invariant;
\item le morphisme \(p_2 \colon C \to X^{(q)}\) s'écrit comme la composée d'un homéomorphisme universel plat et d'un morphisme étale.
\end{itemize}

La principale difficuté est d'assurer la seconde condition, ce que le lemme suivant permet:

\begin{lemm}[{\cite[proposition~1.1.7]{ShuVar}}]
Il existe un ouvert dense
\(U\subseteq X\) et un éclatement \(\pi : \tilde X \to \oX\) qui est un
isomorphisme sur \(U\) tel que  \(\tilde X\setminus \pi^{-1}(U)\) est localement
\(\tilde C\)-invariant, où \(\tilde C \subseteq \tilde X\times \tilde
X^{(q)}\) est la clôture de \(\pi^{-1}(\restr{\oC}{U})\).
\end{lemm}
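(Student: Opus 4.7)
L'idée est d'abord de trouver $U$ évitant globalement le lieu problématique dans $\oX$, puis de raffiner la situation par un éclatement pour transformer cette propriété globale en l'invariance \emph{locale} du nouveau bord.

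Pour la première étape, on construit une suite croissante de fermés de $\oX$ par récurrence : $E_0 = \partial X = \oX \setminus X$ et $E_{n+1} = E_n \cup p_1\bigl(\oC \cap (\oX \times E_n^{(q)})\bigr)$. Chaque $E_n$ est fermé puisque $p_1 \colon \oC \to \oX$ est propre ($\oC$ étant fermé dans le produit de variétés projectives $\oX \times \oX^{(q)}$). Comme $p_2$ est dominant et $p_1$ génériquement finie (les variétés $\oC$, $\oX$ et $\oX^{(q)}$ ayant toutes la même dimension $d$), une récurrence simple montre que chaque $E_n$ est de codimension au moins $1$ dans $\oX$. Par noethérianité, la suite $(E_n)$ se stabilise en un fermé $E_\infty$ vérifiant $p_1(\oC \cap (\oX \times E_\infty^{(q)})) \subseteq E_\infty$. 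On pose alors $U = X \setminus E_\infty$ : c'est un ouvert dense de $X$, et $\oX \setminus U = E_\infty$ est \emph{globalement} $\oC$-invariant dans $\oX$.

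Pour la seconde étape, on construit l'éclatement $\pi \colon \tilde X \to \oX$, isomorphisme sur $U$, en éclatant un faisceau d'idéaux supporté sur $E_\infty$. La difficulté est que l'invariance globale de $E_\infty$ ne donne pas automatiquement l'invariance locale de $\tilde X \setminus \pi^{-1}(U)$ sous $\tilde C$ : des composantes parasites de $\tilde C$, issues de la clôture d'un ouvert de $\oC$ pouvant admettre plusieurs branches au-dessus d'un même point éclaté, pourraient relier $\pi^{-1}(U)$ au diviseur exceptionnel au voisinage d'un point du bord. On contourne ce problème en invoquant un théorème d'aplatissement à la Raynaud--Gruson : quitte à raffiner $\pi$, on peut supposer que la projection $p_1$ restreinte au transformé strict de $\oC$ devient plate sur $\tilde X$. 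La platitude contrôle alors la dimension des fibres et empêche les liens pathologiques entre l'intérieur $\pi^{-1}(U)$ et le diviseur exceptionnel, ce qui donne l'invariance locale cherchée.

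Le principal obstacle est précisément cette étape d'éclatement : un choix naïf, comme éclater $E_\infty$ lui-même, ne suffit pas, et il faut un centre plus fin --- typiquement un idéal de Fitting associé aux fibres de $p_1$ --- pour garantir simultanément la platitude et le contrôle des branches de $\tilde C$. La vérification finale procède alors par une analyse locale dans chaque carte affine de $\tilde X$, en exploitant que $\tilde C$ est la clôture de $\pi^{-1}(\restr{\oC}{U})$ et n'admet donc, par construction, aucune branche supplémentaire au-dessus de $U$.
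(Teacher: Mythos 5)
Votre première étape --- la construction itérative $E_0 = \partial X$, $E_{n+1} = E_n \cup p_1(p_2^{-1}(E_n^{(q)}))$, la stabilisation par noethérianité et le choix $U = X \setminus E_\infty$ --- est correcte et constitue bien le cœur de l'argument de Shuddhodan--Varshavsky (l'exposé ne redonne pas la preuve et renvoie directement à leur article). Un point mineur : la finitude générique de $p_1$ n'est pas ce qui sert ici ; il suffit que $p_2$ soit dominante, ce qui garantit que $p_2^{-1}(E_n^{(q)})$ est un fermé strict de la variété irréductible $\oC$, donc de dimension $< d$.

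En revanche, votre seconde étape repose sur une prémisse erronée. L'invariance \emph{globale} implique trivialement l'invariance \emph{locale} --- il suffit de prendre $V = X$ dans la définition --- donc il n'y a rien à << transformer >>. Mieux : l'invariance globale de $E_\infty$ sous $\oC$ se transporte automatiquement en l'invariance globale (donc locale) de $\tilde X \setminus \pi^{-1}(U)$ sous $\tilde C$, pour \emph{n'importe quel} éclatement $\pi$ isomorphe sur $U$, y compris $\pi = \id$. En effet, $\pi \times \pi^{(q)}$ étant propre et surjectif, $(\pi\times\pi^{(q)})(\tilde C)$ est l'adhérence de $\restr{\oC}{U}$ dans $\oX\times\oX^{(q)}$, c'est-à-dire $\oC$ ; si $(\tilde a,\tilde b)\in\tilde C$ avec $\pi^{(q)}(\tilde b)\in E_\infty^{(q)}$, alors $(\pi(\tilde a),\pi^{(q)}(\tilde b))\in\oC$, donc $\pi(\tilde a)\in E_\infty$ par invariance globale, ce qui donne $\tilde a\in\tilde X\setminus\pi^{-1}(U)$. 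Les << composantes parasites >> de $\tilde C$ que vous redoutez n'affectent pas cette direction d'invariance, et l'appel à l'aplatissement de Raynaud--Gruson et aux idéaux de Fitting est superflu : votre étape 1 suffit déjà à conclure.
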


D'après un théorème de \textcite[théorème~4.1]{dJo}, soit alors \(Z\) une
variété projective lisse de même dimension que \(X\) et \(f \colon Z \to \oX\)
une altération  --- c'est-à-dire un morphisme dominant propre génériquement fini
--- telle que \(f^{-1}(\partial X) \subseteq Z\) soit un diviseur à croisements
normaux strict. Soient \(\pi \colon \tilde Y \to Z\times
Z^{(q)}\) l'éclatement de \(Z\times Z^{(q)}\) le long de \(f^{-1}(\partial X)\)
et \(\tilde \Gamma\subseteq \tilde Y\) la clôture de \(\pi^{-1}(\Gamma_{q,Z})\).
\textcite[\S 2.2.5]{ShuVar} construisent alors \(\tilde \alpha \in A^d(\tilde
Y)\) tel que:
\begin{align*}
\deg(f)\degins(p_2) \cdot \# C\cap\Gamma_{q,X}(K) &= \tilde\alpha\cdot [\tilde\Gamma]\\
&= \deg(f)\deg(p_1)q^d + O(q^{d-1/2}).
\end{align*}

\section{Théorie asymptotique de l'automorphisme de Frobenius}\label{Frob}

Nous pouvons maintenant prouver le théorème de \textcite{Hru-Frob} selon lequel la théorie asymptotique du Frobenius est celle d'un automorphisme générique. Commençons par en rappeler l'énoncé:

\begin{theo}\label{asymp Frob}
Pour tout énoncé \(\psi\),
\begin{center}
pour tout \(q\) grand, \(\psi\) est vraie de \(\alg{\Ff}_q\) et \(\phi_q\) --- ce qu'on note \((\alg{\Ff}_q,\phi_q) \models \psi\),
\end{center} 
si et seulement si,
\begin{center}
\(\psi\) est conséquence des axiomes \(\ACFA\) --- ce qu'on note \(\ACFA\models\psi\).
\end{center}
\end{theo}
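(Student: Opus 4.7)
L'approche naturelle consiste à encoder les deux directions du théorème par des arguments d'ultraproduits, reposant sur deux faits clés : d'une part (a), tout ultraproduit \(\prod_{\fU}(\alg{\Ff}_q,\phi_q)\) par un ultrafiltre \(\fU\) non principal sur les puissances de nombres premiers est un modèle de \(\ACFA\) ; d'autre part (b), pour tout modèle \((K,\sigma)\) de \(\ACFA\), il existe un tel \(\fU\) pour lequel \(\prod_{\fU}(\alg{\Ff}_q,\phi_q)\equiv(K,\sigma)\). Les deux implications suivent alors formellement : si \(\ACFA\models\psi\) mais qu'une infinité de \((\alg{\Ff}_{q_n},\phi_{q_n})\) satisfont \(\neg\psi\), l'ultraproduit non principal correspondant est un modèle de \(\ACFA\) (par (a)) vérifiant \(\neg\psi\) (par Łoś), absurde ; réciproquement, si \(\ACFA\not\models\psi\), fixant un \((K,\sigma)\models\ACFA+\neg\psi\), l'ultraproduit équivalent donné par (b) vérifie à la fois \(\psi\) (car \(\fU\) contient les cofinis et \(\psi\) est vraie dans \((\alg{\Ff}_q,\phi_q)\) pour \(q\) grand) et \(\neg\psi\), absurde.

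Pour (a), les deux premiers axiomes de la proposition~\ref{ACFA} passent immédiatement à l'ultraproduit par Łoś. La condition~\ref{ACFA geom} s'exprime, à complexité bornée, comme un schéma de formules du premier ordre affirmant que, pour tous paramètres décrivant une variété \(X\), une sous-variété \(C\subseteq X\times X^\sigma\) à projections dominantes, et un ouvert non vide \(U\subseteq C\), \(U\) rencontre \(\{(x,\sigma(x)) : x\in X\}\). Le corollaire~\ref{Twist LW qual} assure précisément que chacune de ces formules est vraie dans \((\alg{\Ff}_q,\phi_q)\) pour \(q\) suffisamment grand, la borne sur \(q\) ne dépendant que de la complexité des paramètres ; un appel à Łoś conclut.

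Pour (b), la proposition~\ref{compl ACFA} réduit la tâche à ajuster deux invariants : la caractéristique de l'ultraproduit et l'action de \(\sigma^*\) sur la clôture algébrique du corps premier. La caractéristique se règle en concentrant ou non \(\fU\) sur les puissances d'un unique premier. En caractéristique \(p\), la restriction de \(\sigma\) à \(\alg{\Ff}_p\) correspond à un élément \(s\in\hat{\Zz}\), tandis que celle de \(\phi_{p^k}\) correspond à \(k\in\hat{\Zz}\) ; il suffit alors d'étendre à un ultrafiltre le filtre engendré par les cofinis et par les progressions arithmétiques \(\{k : k\equiv s_n \pmod{n}\}\) pour chaque \(n\), ce qui est possible par un argument élémentaire de propriété d'intersection finie. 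En caractéristique nulle, c'est ici que le théorème de densité de Chebotarev intervient : pour tout \(\gamma\in\Gal(\alg{\Qq}/\Qq)\) et toute extension galoisienne finie \(L/\Qq\), il existe une infinité de premiers \(p\) tels que le Frobenius en \(p\), pour un choix convenable de place de \(\alg{\Qq}\) au-dessus, agisse sur \(L\) comme \(\gamma|_L\) ; ces conditions sont mutuellement compatibles et permettent de construire un ultrafiltre réalisant la restriction souhaitée de \(\gamma\) à \(\alg{\Qq}\).

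Le principal obstacle est le point (b) en caractéristique nulle, où il faut combiner avec soin la densité de Chebotarev et le choix cohérent des places au-dessus des premiers pour que l'action du Frobenius dans l'ultraproduit corresponde exactement, après identification de \(\alg{\Qq}\) au sous-corps des algébriques, à l'automorphisme initial de \(K\). Le reste de la démonstration est essentiellement formel, l'ingrédient géométrique substantiel étant concentré dans le corollaire~\ref{Twist LW qual}.
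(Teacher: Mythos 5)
Votre preuve suit essentiellement la même démarche que le texte : réduction à la proposition~\ref{ultra Frob} sur les ultraproduits, une direction obtenue par Łoś et le corollaire~\ref{Twist LW qual} (via le schéma d'énoncés \(\psi_n\)), l'autre via la proposition~\ref{compl ACFA} combinée à l'argument profini sur \(\hat{\Zz}\) en caractéristique~\(p\) et au théorème de densité de Chebotarev (avec recollement des morphismes \(\theta_F\)) en caractéristique nulle. Les deux démonstrations coïncident.
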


Commençons par reformuler ce résultat en terme d'ultraproduits, qui sont des objets naturels pour traiter ces questions asymptotiques.

\begin{defi}
Soit \(I\) un ensemble non vide. Un ultrafiltre non-principal sur \(I\) est un ensemble non vide \(\fU\) de parties de \(I\), tel que:
\begin{itemize}
\item L'ensemble \(\fU\) ne contient aucun ensemble fini;
\item L'ensemble \(\fU\) est clos par intersection finie;
\item Pour tout \(X\subseteq I\), si \(X \not\in \fU\) alors \(I\setminus X \in \fU\).
\end{itemize}
\end{defi}

Par le lemme de Zorn, toute collection de parties de \(I\) dont les intersections finies sont infinies est incluse dans un ultrafiltre non-principal.

\begin{defi}
Soient \(\fU\) un ultrafiltre non-principal sur \(I\) et \((K_i,\sigma_i)\) des corps aux différences indexés par \(I\). L'ultraproduit \(\prod_{i\to\fU}(K_i,\sigma_i)\) est le corps aux différences
\[(\prod_i K_i / \fm,\sigma),\]
où \(\fm\) est l'idéal (maximal) de \(\prod_i K_i\) tel que, pour tout \(x_i\in K_i\),
\[(x_i)_{i} \in \fm\text{ si et seulement si } \{i \in I : x_i = 0\}\in\fU\]
et \(\sigma\) est l'automorphisme induit par les \(\sigma_i\):
\[\sigma((x_i)_{i} + \fm) = (\sigma_i(x_i))_i + \fm.\]
\end{defi}

D'après un théorème de \L o\v s,
 pour tout énoncé \(\psi\), on a
\begin{equation}\label{Los}
\prod_{i\to\fU} (K_i,\sigma_i) \models\psi\text{ si et seulement si }\{i\in I : (K_i,\sigma_i) \models \psi\} \in \fU.
\end{equation}

On voit alors que si \(\{q : (\alg{\Ff}_q,\phi_q) \models \psi\}\) est cofini, il est contenu dans tout ultrafiltre non-principal \(\fU\) sur l'ensemble des puissances de nombres premiers et on a donc \(\prod_{q\to\fU} (\alg{\Ff}_q,\phi_q) \models \psi\). Réciproquement, si \(X = \{q : (\alg{\Ff}_q,\phi_q) \not\models \psi\}\) est infini, il existe un ultrafiltre non-principal \(\fU\) tel que \(X\in\fU\). On a alors \(\prod_{q\to\fU} (\alg{\Ff}_q,\phi_q) \not\models \psi\). Le théorème~\ref{asymp Frob} est donc équivalent à l'énoncé suivant:

\begin{prop}\label{ultra Frob}
Pour tout corps aux différences \((K,\sigma)\), sont équivalents:
\begin{enumerate}
\item Le corps aux différences \((K,\sigma)\) vérifie les axiomes de \(\ACFA\);
\item Il existe un ultrafiltre non-principal \(\fU\) sur l'ensemble des puissances de nombres premiers tel que \(\prod_{q\to\fU} (\alg{\Ff}_q,\phi_q)\) et \((K,\sigma)\) vérifient les mêmes énoncés.
\end{enumerate}
\end{prop}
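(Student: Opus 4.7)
La stratégie adoptée consiste à démontrer les deux implications séparément, en exploitant le théorème de \L o\v s (\ref{Los}) qui relie la satisfaction dans un ultraproduit à la satisfaction asymptotique dans les facteurs.

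Pour l'implication (2) $\Rightarrow$ (1), la preuve est essentiellement un corollaire des résultats déjà établis: il s'agit de vérifier que tout ultraproduit non-principal $(K^\star,\sigma^\star) = \prod_{q\to\fU}(\alg\Ff_q,\phi_q)$ satisfait les axiomes de la proposition~\ref{ACFA}. Les deux premiers sont immédiats, chaque $\alg\Ff_q$ étant algébriquement clos et chaque $\phi_q$ bijectif, et ces propriétés passent à l'ultraproduit par \L o\v s. Pour la condition géométrique~(\ref{ACFA geom}), exprimée par un schéma de formules paramétré par des familles constructibles de couples $(X,C)$, chaque instance est exactement l'énoncé du corollaire~\ref{Twist LW qual}: pour $q$ suffisamment grand, avec une borne qui ne dépend que de la complexité des équations, tout ouvert non vide $U\subseteq C$ contient un point de $\Gamma_{q,X}(K)$. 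L'ensemble des $q$ où l'axiome échoue est donc fini, et son complémentaire appartient à tout ultrafiltre non-principal.

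Pour l'implication (1) $\Rightarrow$ (2), plus subtile, on part de $(K,\sigma)\models \ACFA$ et on cherche à construire un ultrafiltre $\fU$ tel que $\prod_{q\to\fU}(\alg\Ff_q,\phi_q)\equiv (K,\sigma)$. Tout ultraproduit non-principal étant déjà un modèle de $\ACFA$ par l'implication précédente, la proposition~\ref{compl ACFA} réduit le problème à trouver $\fU$ tel que la clôture algébrique $K_0^\star$ du corps premier dans l'ultraproduit, munie de l'endomorphisme induit, soit isomorphe comme corps aux différences à $(K_0,\restr{\sigma}{K_0})$, où $K_0$ désigne la clôture algébrique du corps premier de $K$.

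On distingue alors deux cas selon la caractéristique de $K$. En caractéristique $p>0$, $K_0 = \alg\Ff_p$ et $\restr{\sigma}{K_0}$ correspond à un élément $\hat a \in \Gal(\alg\Ff_p/\Ff_p)\simeq \hat\Zz$: pour chaque $n$, l'ensemble $S_n$ des puissances $q=p^b$ telles que $b\equiv \hat a \pmod n$ est infini, et la famille $\{S_n\}_n$ est stable par intersections finies, donc se complète par Zorn en un ultrafiltre non-principal adéquat. En caractéristique nulle, $K_0 = \alg\Qq$ et $\restr{\sigma}{\alg\Qq}\in \Gal(\alg\Qq/\Qq)$; c'est ici qu'intervient le théorème de densité de Chebotarev, qui fournit, pour toute extension galoisienne finie $L/\Qq$, une infinité de premiers $p$ non ramifiés dont le Frobenius réalise la classe de conjugaison de $\restr{\sigma}{L}$ dans $\Gal(L/\Qq)$. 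Les ensembles $T_L$ correspondants (puissances de tels premiers) sont infinis, forment une famille stable par intersections finies via des compositums, et un ultrafiltre non-principal les contenant tous assure à la fois la caractéristique nulle de l'ultraproduit et la compatibilité voulue des Frobenius avec $\restr{\sigma}{\alg\Qq}$.

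L'obstacle principal réside dans ce dernier cas: il faut organiser soigneusement le passage des informations de conjugaison de Frobenius (données par Chebotarev, donc à conjugaison près) à la construction d'un isomorphisme effectif $(\alg\Qq,\restr{\sigma}{\alg\Qq})\simeq (K_0^\star,\restr{\sigma^\star}{K_0^\star})$. Cela se fait par un argument diagonal sur une tour d'extensions galoisiennes exhaustive $L_n/\Qq$, avec choix compatibles de plongements $L_n \hookrightarrow \alg\Ff_q$ pour $q\in T_{L_n}$, mais demande de la vigilance dans la cohérence des représentants successifs.
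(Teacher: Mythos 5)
Votre preuve suit essentiellement la même démarche que le texte: pour $(2)\Rightarrow(1)$, la vérification via \L o\v s des axiomes de \(\ACFA\) sur l'ultraproduit, à l'aide du corollaire~\ref{Twist LW qual}; pour $(1)\Rightarrow(2)$, la réduction via la proposition~\ref{compl ACFA} à la construction d'un morphisme de corps aux différences depuis la clôture algébrique du corps premier, traitée en caractéristique $p$ par un argument élémentaire sur $\hat\Zz$ et en caractéristique nulle par Chebotarev. Le recollement final que vous signalez comme délicat est exactement celui que le texte règle par compacité du groupe de Galois absolu (ou de la logique du premier ordre), après avoir noté que les ensembles $\Sigma_F$ sont décroissants en~$F$; votre argument diagonal sur une tour exhaustive est une formulation équivalente de ce même point.
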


Pour prouver que l'assertion 2 implique l'assertion 1, il faut vérifier que pour tout ultrafiltre non-principal \(\fU\) sur les puissances de nombres premiers, l'ultraproduit \((K,\sigma) = \prod_{q\to\fU} (\alg{\Ff}_q,\phi_q)\) vérifie les axiomes de \(\ACFA\) (voir la proposition~\ref{ACFA}). Tout d'abord, le corps \(K\) est algébriquement clos, et \(\sigma\) est surjectif, puisque cela s'exprime par des énoncés et que c'est le cas pour tous les corps aux différences \((\alg{\Ff}_q,\phi_q)\).

Soit enfin \(\psi_{n}\) l'énoncé qui exprime que, pour toutes variétés \(X \subseteq \Aa^n\) et \(C \subseteq X\times X^\sigma\) définies par au plus \(n\)~équations de degré au plus~\(n\) et telles que les projections de~\(C\) vers~\(X\) et~\(X^\sigma\) sont dominantes, et tout ouvert de Zariski \(U \subseteq C\) non vide défini par au plus \(n\)~équations de degré au plus~\(n\), il existe un \(x\in X\) tel que \((x,\sigma(x)) \in U\). Par le corollaire~\ref{Twist LW qual}, l'ensemble des~\(q\) tel que \(\{(\alg{\Ff}_q,\phi_q) \models \psi_n\}\) est cofini. Il est donc dans tous les ultrafiltres non-principaux~\(\fU\) et donc \(\prod_{q\to\fU} (\alg{\Ff}_q,\phi_q) \models \psi_n\).

La réciproque est plus classique et déjà essentiellement présente dans les travaux de \textcite{Ax-Psf}. Étant donné un corps aux différences \((K,\sigma)\) existentiellement clos, d'après la proposition~\ref{compl ACFA}, il nous faut trouver un ultrafiltre non-principal \(\fU\) sur les puissances de nombres premiers tel que les clôtures algébriques des corps premiers de \(K\) et \(\prod_{q\to\fU} (\alg{\Ff}_q,\phi_q)\) soient isomorphes comme corps aux différences.

Si \(K\) est de caractéristique strictement positive \(p\), c'est relativement immédiat. Une fois choisie une identification de la clôture algébrique du corps premier de \(K\) à \(\alg{\Ff}_p\), pour tout entier \(n\geq 1\), l'ensemble
\[\Sigma_n = \{m \geq 1 : \restr{\sigma}{\Ff_{p^n}} = \phi_{p^m}\}\]
est infini. En effet, il existe un entier \(r\) tel que \(\restr{\sigma}{\Ff_{p^n}} = \phi_{p^r} = \phi_{p^{r + ms}}\), pour tout entier \(s\geq 1\). De plus, si \(n\) divise \(m\), on a \(\Sigma_m \subseteq \Sigma_n\). Il existe donc un ultrafiltre non-principal \(\fU\) sur l'ensemble des entiers strictement positifs qui contient tous les \(\Sigma_n\). Par le théorème de \L o\v s (\emph{cf.} (\ref{Los})), l'application diagonale \(\theta \colon (\alg{\Ff}_p,\restr{\sigma}{\alg{\Ff}_p}) \to \prod_{q\to\fU} (\alg{\Ff}_q,\phi_q)\) qui à tout \(x \in \alg{\Ff}_p\) associe la classe de \((x)_{n\geq 1}\) est un morphisme de corps aux différences, ce qui conclut la preuve de la proposition~\ref{ultra Frob} dans le cas de caractéristique positive.

Si \(K\) est de caractéristique nulle, la preuve est plus complexe et repose sur un résultat classique de théorie algébrique des nombres: le théorème de densité de Chebotarev.
Soit \(F \leq K\) une extension galoisienne finie de \(\Qq\) contenue dans \(K\). On note \(\cO \subseteq F\) la clôture intégrale de \(\Zz\) dans \(F\). Pour tout nombre premier \(p\) et tout idéal premier \(\fp\) de \(\cO\) au dessus de \(p\) --- c'est-à-dire tel que \(\fp \cap \Zz = (p)\) --- on définit \(D_\fp = \{\sigma\in\Gal(F:\Qq) \mid \sigma(\fp) = \fp\}\), son groupe de décomposition. Le corps \(k_\fp = \cO/\fp\) est alors une extension (galoisienne) finie de \(\Ff_p\) et on a un morphisme naturel \(D_{\fp} \to \Gal(k_\fp/\Ff_p)\). On dit que \(p\) est non ramifié (dans \(F\)), si c'est un isomorphisme. On note alors \(\phi_\fp\in D_\fp\) la préimage de \(\phi_p\). Une conséquence du théorème de Chebotarev est que l'ensemble:
\[\Sigma_{F} = \{p \text{ non ramifié} : \restr{\sigma}{F} = \phi_\fp\text{ pour un }\fp\text{ au dessus de }p\}\]
est infini --- pour être précis, le théorème de densité de Chebotarev énonce que cet ensemble est de densité (naturelle ou analytique) égale à la taille de la classe de conjugaison de \(\restr{\sigma}{F}\) dans \(\Gal(F/\Qq)\) divisée par le degré de l'extension.

Soit \(\fU\) un ultrafiltre non-principal sur l'ensemble des nombres premiers qui contient~\(\Sigma_F\). Pour tout \(p\in \Sigma_F\), on fixe un \(\fp\) au dessus de \(p\) tel que \(\restr{\sigma}{F} = \phi_\fp\) --- et si \(p\) n'est pas dans \(\Sigma_F\), on fixe \(\fp\) quelconque au dessus de \(p\). L'application \((\cO,\restr{\sigma}{\cO}) \to \prod_{p\to\fU} (\alg{\Ff}_p,\phi_p) \) qui à tout \(x\in\cO\) associe la classe de \((x + \fp)_{p}\), est un morphisme injectif d'anneaux aux différences. En effet, \(\Sigma_F \in \fU\) et pour tout \(p \in \Sigma_F\), \[\sigma(x) + \fp = \phi_\fp(x) + \fp = \phi_p(x + \fp).\]
Ce morphisme induit un morphisme de corps aux différences \((F,\restr{\sigma}{F}) \to \prod_{p\to\fU} (\alg{\Ff}_p,\phi_p)\).

Si \(E\) est une extension galoisienne de \(F\) contenue dans~\(K\), on a \(\Sigma_E \subseteq \Sigma_F\). Il existe donc un ultrafiltre non-principal~\(\fU\) qui contient tous les~\(\Sigma_F\).
On a donc montré ci-dessus que pour ce choix de \(\fU\), pour toute extension galoisienne finie~\(F\) de~\(\Qq\) contenue dans~\(K\), il existe un morphisme de corps aux différences \(\theta_F \colon (F,\restr{\sigma}{F}) \to \prod_{p\to\fU} (\alg{\Ff}_p,\phi_p)\).

Soit \(K_0\) la clôture algébrique de \(\Qq\) dans \(K\). Par compacité du groupe de Galois absolu de \(\Qq\) pour la topologie profinie (ou par la compacité de la logique du premier ordre), on peut recoller ces morphismes pour produire un morphisme de corps aux différences \(\theta \colon (K_0,\restr{\sigma}{K_0}) \to \prod_{p\to\fU} (\alg{\Ff}_p,\phi_p)\). Par la proposition~\ref{compl ACFA}, les corps aux différences \((K,\sigma)\) et \(\prod_{p\to\fU} (\alg{\Ff}_p,\phi_p)\) vérifient les mêmes énoncés, ce qui conclut la preuve de la proposition~\ref{ultra Frob} dans le cas de caractéristique zéro.\medskip

On remarque que dans la preuve ci-dessus, si \(K\) est de caractéristique strictement positive \(p\), on n'a besoin que d'un ultraproduit des \((\alg{\Ff}_p,\phi_{p^n})\); ce qu'on pourrait de toute manière déduire du théorème (\ref{Los}) de \L o\v s. Si \(K\) est de caractéristique zéro, un ultraproduit des \((\alg{\Ff}_p,\phi_{p})\) suffit, ce qui est plus inattendu.
Si, pour tout \(p\) premier, \(\ACFA_p\) est l'ensemble d'énoncés \(\ACFA \cup \{p = 0\}\) et \(\ACFA_0\) l'ensemble \(\ACFA\cup\{p \neq 0 : p\text{ premier}\}\), nous avons donc prouvé le résultat suivant qui précise le théorème~\ref{asymp Frob}:

\begin{prop}
Pour tout énoncé \(\psi\),
\[(\alg{\Ff}_p,\phi_p) \models \psi \text{ pour tout } p \text{ premier grand si et seulement si }\ACFA_0\models\psi\]
et 
\[(\alg{\Ff}_p,\phi_{p^n}) \models \psi \text{ pour tout } n \text{ grand si et seulement si }\ACFA_p\models\psi.\]
\end{prop}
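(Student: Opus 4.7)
Le plan est de suivre la même reformulation par ultraproduits que celle utilisée pour déduire le théorème~\ref{asymp Frob} de la proposition~\ref{ultra Frob}, en prêtant attention aux morphismes de Frobenius effectivement invoqués. Par le théorème de Łoś~(\ref{Los}), un énoncé \(\psi\) est vrai de \((\alg{\Ff}_p,\phi_{p^n})\) pour tout \(n\) suffisamment grand si et seulement s'il est vrai dans tout ultraproduit \(\prod_{n\to\fU}(\alg{\Ff}_p,\phi_{p^n})\), où \(\fU\) parcourt les ultrafiltres non-principaux sur les entiers strictement positifs; de même, \(\psi\) est vrai de \((\alg{\Ff}_p,\phi_p)\) pour tout \(p\) premier suffisamment grand si et seulement s'il est vrai dans tout ultraproduit \(\prod_{p\to\fU}(\alg{\Ff}_p,\phi_p)\) par un ultrafiltre non-principal sur l'ensemble des nombres premiers. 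L'énoncé se ramène donc à caractériser ces deux familles d'ultraproduits comme étant, à équivalence élémentaire près, les modèles respectifs de \(\ACFA_p\) et de \(\ACFA_0\).

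Pour l'implication directe, tout tel ultraproduit est un modèle de \(\ACFA_p\) (respectivement \(\ACFA_0\)): la caractéristique est forcée d'être \(p\) (respectivement \(0\), par non-principalité de l'ultrafiltre sur les premiers), et les axiomes de \(\ACFA\) de la proposition~\ref{ACFA}---clôture algébrique, surjectivité de l'endomorphisme et condition géométrique~\ref{ACFA geom}---sont vrais dans chaque facteur pour les indices suffisamment grands, grâce au corollaire~\ref{Twist LW qual} des estimées de Lang--Weil tordues; le théorème de Łoś les transmet alors à l'ultraproduit.

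Pour la réciproque, je me contenterais de rejouer la preuve de la proposition~\ref{ultra Frob} en surveillant quels morphismes de Frobenius apparaissent. En caractéristique \(p\), ayant identifié la clôture algébrique du corps premier de \(K\) à \(\alg{\Ff}_p\), les ensembles \(\Sigma_n = \{m\geq 1 : \restr{\sigma}{\Ff_{p^n}} = \phi_{p^m}\}\) sont infinis et décroissent par divisibilité, d'où un ultrafiltre non-principal \(\fU\) sur les entiers strictement positifs les contenant tous et un morphisme diagonal de corps aux différences \((\alg{\Ff}_p,\restr{\sigma}{\alg{\Ff}_p})\to\prod_{n\to\fU}(\alg{\Ff}_p,\phi_{p^n})\); la proposition~\ref{compl ACFA} conclut. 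En caractéristique zéro, le théorème de densité de Chebotarev fournit, pour chaque extension galoisienne finie \(F/\Qq\) contenue dans \(K\), un ensemble infini \(\Sigma_F\) de nombres premiers \(p\) non ramifiés pour lesquels un \(\fp\mid p\) vérifie \(\restr{\sigma}{F}=\phi_\fp\); le recollement usuel produit un morphisme de corps aux différences \((K_0,\restr{\sigma}{K_0})\to\prod_{p\to\fU}(\alg{\Ff}_p,\phi_p)\) et la proposition~\ref{compl ACFA} permet à nouveau de conclure.

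La principale observation---et l'unique contenu nouveau au-delà de la proposition~\ref{ultra Frob}---est que dans chacun des deux cas, seuls les morphismes de Frobenius de la forme annoncée sont invoqués: en caractéristique \(p\), uniquement les itérés \(\phi_{p^m}\) au-dessus du premier \(p\) fixé; en caractéristique zéro, uniquement les premiers Frobenius \(\phi_p\) quand \(p\) varie, puisque le théorème de Chebotarev fournit précisément le substitut de Frobenius \(\phi_\fp\) attaché à un premier non ramifié, et non une de ses itérations. Aucun nouvel obstacle n'est donc à surmonter.
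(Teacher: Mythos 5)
Votre preuve est correcte et suit exactement le même chemin que le texte: le papier déduit lui aussi cet énoncé en remarquant que la preuve de la proposition~\ref{ultra Frob} n'invoque, en caractéristique \(p\), que les ultraproduits de \((\alg{\Ff}_p,\phi_{p^n})\) sur les entiers \(n\), et en caractéristique zéro, que les ultraproduits de \((\alg{\Ff}_p,\phi_p)\) sur les nombres premiers \(p\), grâce au théorème de Chebotarev. Rien à ajouter.
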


\section{Quelques applications}\label{App}

Pour finir, exposons d'autres conséquences du théorème~\ref{Twist LW} en dynamique algébrique et en géométrie algébrique aux différences. Sans prétendre être exhaustif, cet énoncé a aussi des conséquences en géométrie algébrique \parencite{EsnMeh,EsnSriBos} et en théorie des groupes \parencite{BorSap}, mais elles s'éloignent plus du sujet principal de cet exposé et nous ne les aborderons pas ici.

\subsection{Dynamique algébrique}

Les premiers résultats dont nous discuterons concernent les points périodiques des endomorphismes de variétés. Soient \(X\) une variété sur un corps \(K\) algébriquement clos et \(f \colon X \dashrightarrow X\) une application rationnelle. La \(f\)-orbite d'un point \(x\)  est l'ensemble des \(f^i(x)\), où \(i\geq 0\) est un entier, quand ils sont définis. Un point \(x\) de \(X(K)\) est dit périodique s'il existe un entier \(i\geq 0\) tel que \(f^i(x)\) est défini et égal à \(x\). 

Il découle du théorème~\ref{Twist LW} que, sur \(\alg{\Ff}_p\), l'ensemble des points périodiques d'une application rationnelle dominante est dense:

\begin{prop}\label{dens period}
Soient \(X\) une variété sur \(\alg{\Ff}_p\) et \(f\colon X \dashrightarrow X\) une application rationnelle dominante. L'ensemble des points périodiques de \(f\) dans \(X(\alg{\Ff}_p)\) est alors Zariski dense dans \(X\).
\end{prop}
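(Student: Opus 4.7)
Le plan est d'appliquer le corollaire~\ref{Twist LW qual} à la clôture du graphe de $f$ pour produire, dans tout ouvert non vide de $X$, un point $x$ satisfaisant $f(x) = \phi_q(x)$ pour une puissance $q$ du Frobenius, puis d'en déduire la périodicité par un argument élémentaire.

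Fixons un ouvert de Zariski non vide $U \subseteq X$. Comme $X$, $U$ et $f$ sont définis par un nombre fini d'équations à coefficients dans $\alg{\Ff}_p$, il existe une puissance $q_0$ de $p$ telle que $X$, $U$ et le graphe de $f$ sont tous définis sur $\Ff_{q_0}$. Quitte à remplacer $U$ par son intersection avec le lieu de définition de $f$, qui est encore un ouvert non vide défini sur $\Ff_{q_0}$, on peut supposer $f$ régulière sur $U$. Pour toute puissance $q$ de $q_0$, on a $X^{(q)} = X$ et on considère la clôture $C = \overline{\Gamma_f} \subseteq X\times X^{(q)}$. La projection $p_1 \colon C \to X$ est birationnelle, et la projection $p_2 \colon C \to X^{(q)} = X$ coïncide avec $f$, donc les deux sont dominantes. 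Le corollaire~\ref{Twist LW qual}, appliqué à l'ouvert non vide $C\cap(U\times X) \subseteq C$, fournit alors, pour toute puissance $q$ de $q_0$ assez grande, un point $x\in U(\alg{\Ff}_p)$ tel que $f(x) = \phi_q(x)$.

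Pour voir qu'un tel $x$ est périodique, on remarque que l'ouvert $U$ étant défini sur $\Ff_{q_0} \subseteq \Ff_q$, il est stable sous $\phi_q$; en particulier $f(x) = \phi_q(x) \in U$, ce qui permet d'itérer $f$. En utilisant la commutation $f\circ\phi_q = \phi_q\circ f$, valide puisque $f$ est définie sur $\Ff_q$, une récurrence immédiate donne $f^n(x) = \phi_q^n(x) \in U$ pour tout $n\geq 0$. Enfin, comme $x\in \alg{\Ff}_p$, il existe $r \geq 1$ tel que $x\in X(\Ff_{q^r})$, d'où $\phi_q^r(x) = x$ et finalement $f^r(x) = x$ : le point $x$ est donc un point périodique de $f$ contenu dans $U$.

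La preuve ne présente pas d'obstacle majeur, l'essentiel du travail étant accompli par le corollaire~\ref{Twist LW qual}. L'unique point technique est de s'assurer que l'ouvert $U$ peut être descendu à un corps fini commun à $X$ et $f$, afin de garantir la stabilité de $U$ sous $\phi_q$, qui est nécessaire pour pouvoir itérer $f$ à partir du point fourni par le corollaire.
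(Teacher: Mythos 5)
Votre preuve est correcte et suit essentiellement la même stratégie que celle du texte : appliquer le corollaire~\ref{Twist LW qual} au graphe de \(f\) pour produire des points \(x\) vérifiant \(f(x)=\phi_q^n(x)\), puis conclure à la périodicité en combinant la commutation de \(f\) avec le Frobenius et l'appartenance de \(x\) à un sous-corps fini. La seule différence d'organisation est que le texte fixe une fois pour toutes un \(q\) tel que \(X\) et \(f\) soient définies sur \(\Ff_q\) et fait varier \(n\) pour établir la densité de \(\bigcup_{n\geq 1}\Gamma_f\cap\Gamma_{q^n,X}\) dans \(\Gamma_f\), là où vous fixez d'abord l'ouvert \(U\) puis choisissez \(q_0\) adapté à \(U\) --- la descente de \(U\) à un corps fini que vous jugez nécessaire est d'ailleurs superflue, car seul le domaine de définition de \(f\), automatiquement \(\Ff_q\)-défini, doit être stable sous \(\phi_q\) pour permettre l'itération.
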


\begin{proof}
Soit \(q\) une puissance de \(p\) telle que \(X\) et \(f\) soient définies sur \(\Ff_q\). Par le corollaire~\ref{Twist LW qual}, l'ensemble \(\bigcup_{n\geq 1} \Gamma_f \cap \Gamma_{q^n,X}(\alg{\Ff}_q)\) est Zariski dense dans \(\Gamma_f\). En particulier, sa (première) projection sur \(X\) est Zariski dense dans \(X\). Pour tout élément \(x\) dans cette projection, \(f(x)\) est défini et il existe un entier \(n\geq 1\) tel que \(f(x) = \phi_{q,X}^n(x)\). Si \(m\) est tel que  \(x\in \Ff_{q^m}\), on a
\[f^m(x) = \phi_{q^{nm},X}(x) = x\]
et donc \(x\) est un point périodique de \(f\).
\end{proof}

Par un argument de spécialisation, \textcite[théorème~5.1]
{Fak} en déduit un résultat similaire sur un corps quelconque:

\begin{theo}
Soient \(X\) une variété projective sur un corps algébriquement clos~\(K\), \(f \colon X \to X\) un morphisme dominant. On suppose qu'il existe  un fibré en droites~\(L\) sur~\(X\) tel que \(f^\star L\otimes L^{-1}\) soit ample. Alors, l'ensemble des points périodiques de~\(f\) dans \(X(K)\) est Zariski dense dans~\(X\).
\end{theo}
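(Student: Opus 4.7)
L'approche est classique, par spécialisation à partir de la proposition~\ref{dens period} déjà établie sur \(\alg{\Ff}_p\). Fixons un ouvert non vide \(U\subseteq X\) ; il suffit d'y trouver un point périodique de \(f\). On choisit une sous-algèbre \(R\subseteq K\) de type fini sur \(\Zz\) sur laquelle \(X\), \(f\), \(L\) et \(U\) admettent des modèles \((\mathcal{X}, F, \mathcal{L}, \mathcal{U})\), avec \(\mathcal{X}\) projectif sur \(\mathrm{Spec}(R)\) et \(F^\star\mathcal{L}\otimes\mathcal{L}^{-1}\) relativement ample.

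L'amplitude sert d'abord à montrer que, pour tout \(n\geq 1\), le lieu des points fixes \(\mathrm{Fix}_n = \{x\in\mathcal{X} : F^n(x) = x\}\) est fini sur \(\mathrm{Spec}(R)\). Si une fibre \(\mathcal{X}_s\) contenait une composante irréductible \(Z\) de \(\mathrm{Fix}_n\) de dimension positive, \(F_s\) permuterait cycliquement les composantes \(Z, F_s(Z),\ldots,F_s^{n-1}(Z)\), de réunion \(W\) stable par \(F_s\), et \(F_s|_W\) y serait un automorphisme d'ordre divisant \(n\). L'identité
\[\mathcal{O}_W = (F_s^n|_W)^\star (\mathcal{L}|_W)\otimes(\mathcal{L}|_W)^{-1} = \bigotimes_{i=0}^{n-1} (F_s^i|_W)^\star\bigl((F_s|_W)^\star(\mathcal{L}|_W)\otimes (\mathcal{L}|_W)^{-1}\bigr)\]
dans \(\mathrm{Pic}(W)\) exhiberait alors \(\mathcal{O}_W\) comme produit tensoriel de fibrés amples --- tirés en arrière de l'ample \((F^\star\mathcal{L}\otimes\mathcal{L}^{-1})|_W\) par les automorphismes \(F_s^i|_W\) ---, ce qui est impossible sur une variété de dimension positive. Étant en outre propre (fermé dans \(\mathcal{X}\) projectif), \(\mathrm{Fix}_n\) est donc fini sur \(\mathrm{Spec}(R)\).

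Tout point fermé \(s\in\mathrm{Spec}(R)\) a un corps résiduel \(k(s)\) fini et, pour \(s\) dans un ouvert dense, \(\mathcal{U}_s\) est un ouvert non vide de \(\mathcal{X}_s\). La proposition~\ref{dens period} appliquée à \((\mathcal{X}_s, F_s)\) y fournit alors un point \(F_s\)-périodique d'une certaine période \(n_s\). Ainsi, les parties constructibles \(A_n\) obtenues (par le théorème de Chevalley) comme images de \(\mathrm{Fix}_n\cap\mathcal{U}\) dans \(\mathrm{Spec}(R)\) recouvrent, lorsque \(n\) varie, tous les points fermés d'un ouvert dense. Il reste à montrer qu'un certain \(A_n\) contient le point générique : cela produit, grâce à la finitude de \(\mathrm{Fix}_n\to\mathrm{Spec}(R)\), un point \(K\)-rationnel de \(\mathcal{U}\) fixé par \(F^n\), c'est-à-dire un point périodique de \(f\) dans \(U(K)\).

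La principale difficulté réside dans cette dernière étape : un simple argument à la Baire ne suffit pas, car une union dénombrable de parties constructibles propres de \(\mathrm{Spec}(R)\) peut recouvrir tous ses points fermés. Deux contournements semblent envisageables. Le premier, quantitatif, invoque directement les estimées du théorème~\ref{Twist LW} pour majorer \(|\mathrm{Fix}_n\cap\mathcal{U}_s|\) par \(O(|k(s)|^d)\) et borner les périodes \(n_s\) utiles en fonction de \(|k(s)|\), ramenant le recouvrement à une famille finie. Le second, plus proche de l'esprit de la section~\ref{Frob}, forme un ultraproduit \(\prod_{s\to\fU}(\mathcal{X}_s, F_s)\) sur un ultrafiltre non principal \(\fU\) --- sur lequel le théorème de \L o\v s préserve la densité des points périodiques --- et plonge \(K\) dans cet ultraproduit par un argument à la Chebotarev calqué sur celui de la section~\ref{Frob}, afin de descendre la densité jusqu'à \(K\).
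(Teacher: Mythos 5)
L'exposé ne donne pas de preuve de cet énoncé (il renvoie simplement à Fakhruddin), et votre mise en place est effectivement dans l'esprit de cet argument de spécialisation : le choix d'un modèle de type fini sur \(\Zz\), et surtout la preuve de la finitude de \(\mathrm{Fix}_n\) au-dessus de \(\mathrm{Spec}(R)\) via l'identité télescopique dans \(\mathrm{Pic}(W)\) et l'amplitude de \(F^\star\mathcal{L}\otimes\mathcal{L}^{-1}\), sont corrects et constituent bien le c\oe{}ur géométrique. Cela dit, la lacune que vous signalez honnêtement est réelle, et aucune des deux pistes que vous esquissez ne la comble. Pour la piste quantitative : les points périodiques produits par la proposition~\ref{dens period} proviennent des intersections \(\Gamma_f\cap\Gamma_{q^m,X}\), et leur période est contrôlée par le degré résiduel du point sur \(\Ff_q\), qui n'est borné ni par \(m\) ni par \(|k(s)|\) ; on ne peut donc pas se ramener à un nombre fini de \(\mathrm{Fix}_n\). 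Pour la piste par ultraproduits : «~\(x\) est \(f\)-périodique~» est une disjonction infinie, donc hors du champ du théorème de \L o\v s ; la formule «~\(\exists x\in U,\ f(x)=\sigma(x)\)~» survit bien à l'ultraproduit, mais le point obtenu n'a aucune raison d'être périodique puisque \(f\) ne commute pas avec \(\sigma\) en dehors du corps fixe.

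L'idée qui manque est de nature locale et évite complètement le recouvrement des points fermés de \(\mathrm{Spec}(R)\). On choisit \emph{un seul} point fermé \(s\) de bonne réduction, on trouve par la proposition~\ref{dens period} un point périodique \(\bar x\) de \(F_s\) dans \(\mathcal{U}_s\) de période \(n\) en lequel \(F_s^n\) est séparable (donc \(\mathrm{Fix}_n\to\mathrm{Spec}(R)\) est non ramifié en \(\bar x\)), et on relève \(\bar x\) le long du trait \(\mathrm{Spec}(R_s)\) : quitte à se restreindre au lieu lisse de \(\mathcal{X}\), le fait que \(\mathrm{Fix}_n\) soit défini localement par \(d\) équations à fibres finies force, par «~miracle flatness~», la platitude de \(\mathrm{Fix}_n\) au-dessus du trait en \(\bar x\), si bien que la composante de \(\mathrm{Fix}_n\) passant par \(\bar x\) domine \(\mathrm{Spec}(R_s)\) et fournit un point de \(\mathrm{Fix}_n\cap\mathcal{U}\) sur la fibre générique, d'où un point périodique de \(f\) dans \(U(K)\). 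C'est ce mécanisme de relèvement hensélien, et non un argument de recouvrement global, qui fait fonctionner la spécialisation.
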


Bien que cela puisse paraître au premier abord un peu contre-intuitif, \textcite{Ame} déduit aussi de la proposition~\ref{dens period} un résultat de densité des points de \(f\)-orbite infinie:

\begin{theo}
Soient \(X\) une variété sur \(\alg{\Qq}\) et \(f \colon X \dashrightarrow X\) une application rationnelle dominante qui n'est pas d'ordre fini. L'ensemble des points de \(f\)-orbite infinie dans \(X(\alg{\Qq})\) est alors Zariski dense dans \(X\).
\end{theo}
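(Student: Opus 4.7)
Comme l'énoncé concerne la densité de Zariski, il suffit de produire, dans tout ouvert de Zariski non vide \(U\subseteq X\), un point de \(U(\alg{\Qq})\) de \(f\)-orbite infinie. La stratégie, due à Amerik, consiste à relever un point périodique en caractéristique positive en un point fixe \(v\)-adique, puis à raisonner dans un voisinage analytique \(v\)-adique de ce point fixe.

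Tout d'abord, on étend \((X,f,U)\) sur l'anneau des \(S\)-entiers \(\cO_S\) d'un corps de nombres \(F\) de définition, et on fixe un idéal maximal \(\fp\) de \(\cO_S\) de bonne réduction, de corps résiduel fini \(\Ff_q\). D'après la proposition~\ref{dens period} appliquée à la réduction \(\tilde f\colon \tilde X\dashrightarrow \tilde X\) --- qui reste dominante --- les points périodiques de \(\tilde f\) sont Zariski denses dans \(\tilde X\); on en choisit un, \(\tilde x\in \tilde U(\Ff_q)\), de période \(m\), en un point lisse où \(\tilde f^m\) est étale (conditions toutes génériques).

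Notons \(\Cc_v\) le complété d'une clôture algébrique de \(F_v\). La différentielle de \(\tilde f^m\) en \(\tilde x\) étant inversible, le lemme de Hensel appliqué à l'équation \(f^m(y) = y\) fournit un unique \(\hat x\in U(\Cc_v)\) relevant \(\tilde x\) et fixé par \(g := f^m\). Sur un polydisque \(v\)-adique \(D\subseteq U(\Cc_v)\) suffisamment petit autour de \(\hat x\), l'application \(g\) est donnée par une série convergente préservant \(D\), et les itérées \(g^k\) agissent analytiquement sur \(D\).

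L'obstacle principal est de produire dans \(D\) un point de \(U(\alg{\Qq})\) dont la \(f\)-orbite est infinie. Par hypothèse, \(f\) n'est pas d'ordre fini, donc aucun \(g^k\) n'est l'identité comme application rationnelle; par prolongement analytique, chaque \(\mathrm{Fix}(g^k)\cap D\) est alors un sous-ensemble analytique \emph{propre} de \(D\). Les points \(g\)-prépériodiques de \(D\) sont donc contenus dans l'union dénombrable \(\bigcup_{j\geq 0,\, k\geq 1} g^{-j}(\mathrm{Fix}(g^k))\cap D\) de sous-analytiques propres de \(D\). Comme \(X\) est lisse en \(\hat x\) et que \(\alg{\Qq}\) est dense dans \(\Cc_v\), un paramétrage analytique local fournit un continuum de points de \(U(\alg{\Qq})\) dans \(D\); la majorité d'entre eux évitent cette union dénombrable, et ont donc une \(g\)-orbite infinie, donc une \(f\)-orbite infinie. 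Puisque \(U\) était arbitraire, cela conclut.
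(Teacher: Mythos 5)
Il y a un vrai trou au point crucial. Votre argument aboutit à l'inclusion des points pré-périodiques du polydisque \(D\) dans une \emph{union dénombrable} de sous-ensembles analytiques propres \(\bigcup_{j,k} g^{-j}(\mathrm{Fix}(g^k))\cap D\), puis conclut en disant qu'un \og continuum \fg{} de points de \(U(\alg{\Qq})\) dans \(D\) échappe à cette union. Mais \(\alg{\Qq}\) est dénombrable, donc \(U(\alg{\Qq})\) aussi~: il n'y a pas de continuum, seulement une partie dénombrable dense de \(D\). Or un ensemble dénombrable est trivialement recouvert par une union dénombrable de fermés analytiques propres (par exemple ses propres singletons). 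L'argument de catégorie de Baire que vous esquissez fonctionne bien sur \(\Cc_v\) pour produire un point de \(D\) à \(g\)-orbite infinie, mais ne garantit absolument pas que ce point soit algébrique --- c'est précisément ce qui rend l'énoncé non trivial.

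La preuve d'Amerik, que le texte résume, contourne cela par une information beaucoup plus forte~: une \emph{borne uniforme} sur la longueur des \(f\)-orbites finies des points du disque résiduel, obtenue en combinant des résultats de Amerik--Bogomolov--Rovinsky et de Bell--Ghioca--Tucker (essentiellement un argument de linéarisation \(v\)-adique au voisinage du point fixe, du type Herman--Yoccoz / Rivera-Letelier, qui force toute orbite périodique à avoir une période bornée par le choix du premier). Avec cette borne \(N\), les points pré-périodiques de \(D\) sont contenus dans la sous-variété analytique propre \(\bigcup_{k\leq N}\mathrm{Fix}(g^k)\) (la pré-périodicité coïncide avec la périodicité car \(g\) est un automorphisme analytique du disque), qui est \emph{finie} et donc d'intérieur vide~; la densité de \(X(\alg{\Qq})\) dans \(D\) permet alors de conclure. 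C'est cette uniformité, et non un simple argument de densité ou de catégorie, qui fait marcher la preuve~; sans elle, votre rédaction ne donne pas le théorème.
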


L'idée de la preuve est la suivante. Soit \(K\) un corps de nombres sur lequel \(X\) et \(f\) sont définies. On applique alors la proposition~\ref{dens period} à la réduction \(\of\) de \(f\) modulo un idéal premier bien choisi \(\fp\) de l'anneau des entiers \(\cO\) de \(K\). Quitte à remplacer \(K\) par une extension finie, on trouve donc un point périodique \(x\in \cO/\fp\) de \(\of\) --- et on peut même supposer que \(\of\) est étale en \(x\).

On considère alors l'ensemble \(U\) des éléments de \(X(K_\fp)\) qui se réduisent à \(x\) module \(\fp\), où \(K_\fp\) est le complété de \(K\) pour la topologie \(\fp\)-adique. Par construction, il est invariant par une puissance de \(f\). Par une combinaison de résultats de \textcite{AmeBogRov} et \textcite{BelGhiTuc}, on démontre alors qu'il existe une borne uniforme sur la taille des \(f\)-orbites finies de points de \(U\) --- et donc qu'elles sont contenues dans une sous-variété analytique propre. Le théorème découle alors du fait que les points de \(X(\alg{\Qq})\) sont denses dans \(U\).


\subsection{Géométrie algébrique aux différences}

Soit \((K,\sigma)\) un corps aux différences. Un polynôme aux différences \(P(x_1,\ldots x_n)\) à coefficients dans \(K\) est une fonction de la forme \(Q((\sigma^j(x_i))_{i\leq n,j\geq 0})\) où \(Q\in K[X_{i,j} : i\leq n,j\geq 0]\). L'ordre de \(x_i\) dans \(P\) est le \(m\) maximal tel que \(X_{i,m}\) apparaît dans un monôme de \(Q\) avec un coefficient non nul --- de manière équivalente, le \(m\) maximal tel que \(\sigma^m(x_i)\) apparaît dans \(P\). On choisit la convention que, si aucun des \(\sigma^j(x_i)\) n'apparaît dans \(P\), l'ordre de \(x_i\) est \(-\infty\).

Étant donné \(n\) polynômes aux différences en \(n\) variables \(P_j(x_1,\ldots,x_n)\), où \(1\leq j \leq n\), à coefficients dans \(K\), on souhaite étudier la géométrie de la <<~variété aux différences affine~>> \(X\) définie comme le lieu d'annulation des \(P_j\).
On définit la dimension totale \(\tdim(X)\) de \(X\) comme étant le supremum du degré de transcendance de \(L(\sigma^k(a_i): i\leq n,k\geq 0)\) sur \(L\), pour toute extension \((K,\sigma)\leq (L,\sigma)\) et \(a \in L^n\) tel que \(P_j(a) = 0\), pour tout \(j\leq n\). C'est un équivalent naturel de la dimension de Krull dans le cadre des variétés aux différences.

\textcite[théorème~14.2]{Hru-Frob} déduit du théorème~\ref{Twist LW} la borne suivante sur la dimension totale de \(X\).

\begin{theo}\label{Jacobi}
Soit \(h_{i,j}\) l'ordre de \(x_i\) dans \(P_j\). On a
\[\tdim(X) \leq \max_{\theta\in\fS_n} \sum_{i\leq n}h_{i,\theta(i)}.\]
\end{theo}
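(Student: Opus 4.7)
Le plan est de ramener la borne sur \(\tdim X\) à un comptage asymptotique dans les corps finis, auquel on appliquera le théorème~\ref{Twist LW}.

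À ordres \(h_{i,j}\) et degrés des \(P_j\) fixés, la condition \(\tdim X \leq d\) est une disjonction d'énoncés du premier ordre sur les corps aux différences --- via la dimension algébrique des prolongements \(V_m \subseteq \Aa^{n(m+1)}\) obtenus en substituant \(y_{i,k}\) à \(\sigma^k(x_i)\) et en imposant les équations itérées \(\sigma^l(P_j) = 0\), la constructibilité du lieu de dimension donnée (\textcite[théorème~9.7.7]{EGA4.3}) garantissant l'expressibilité. Par le théorème~\ref{asymp Frob}, il suffit donc d'établir l'inégalité pour \((K,\sigma) = (\alg{\Ff}_q,\phi_q)\) avec \(q\) assez grand.

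On encode alors le système aux différences comme une correspondance algébrique. Avec \(h_i = \max_j h_{i,j}\), \(N = \sum_i(h_i+1)\) et \(Y = \Aa^N_K\), soient \(V \subseteq Y\) la sous-variété algébrique définie par les \(\tilde P_j(y) = 0\), où \(y_{i,k}\) remplace \(\sigma^k(x_i)\) dans \(P_j\), et \(C \subseteq Y \times Y^{(q)}\) la correspondance des \((y,y')\) vérifiant \(y \in V\) et \(y'_{i,k} = y_{i,k+1}\) pour tout \(0 \leq k < h_i\). L'intersection \(C \cap \Gamma_{q,Y}(\alg{\Ff}_q)\) est alors en bijection avec les solutions du système dans \(\alg{\Ff}_q^n\), via l'identification \(y_{i,k} = a_i^{q^k}\). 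Quitte à remplacer \(Y\) par un sous-espace convenable \(W\) (essentiellement l'adhérence de l'image de \(p_1\), qui encode la compatibilité itérée de \(V\) sous Frobenius) sur lequel les projections sont dominantes et \(p_2\) quasi-finie, le théorème~\ref{Twist LW} donne l'estimation
\[\# C \cap \Gamma_{q,W}(\alg{\Ff}_q) = c\, q^{\dim W} + O(q^{\dim W - 1/2}).\]

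La principale difficulté est alors la borne combinatoire \(\dim W \leq \max_{\theta \in \fS_n} \sum_i h_{i,\theta(i)}\), qui constitue le cœur de l'inégalité de Jacobi. Celle-ci résulte d'une analyse composante par composante: à chaque composante irréductible dominante de \(W\) on peut associer un accouplement \(\theta \in \fS_n\) entre équations et variables, où \(\tilde P_{\theta(i)}\) sert à éliminer la variable \(y_{i,h_{i,\theta(i)}}\) (l'itérée de \(x_i\) de plus haut ordre apparaissant dans \(P_{\theta(i)}\)); les paramètres libres restants forment alors un ensemble de cardinal \(\sum_i h_{i,\theta(i)}\). En interprétant finalement \(\tdim X\) comme exposant de croissance asymptotique de \(\# C \cap \Gamma_{q,W}(\alg{\Ff}_q)\) dans un ultraproduit Frobenius convenable, on déduit la borne annoncée.
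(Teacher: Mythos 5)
Votre plan diverge de celui du texte, mais il comporte une lacune qui porte exactement sur le cœur de l'énoncé. Après avoir encodé le système dans une correspondance \(C\subseteq Y\times Y^{(q)}\) sur un espace de prolongement et invoqué le théorème~\ref{Twist LW} pour écrire \(\#C\cap\Gamma_{q,W}(\alg{\Ff}_q)=c\,q^{\dim W}+O(q^{\dim W-1/2})\), vous reconnaissez que « la principale difficulté est la borne combinatoire \(\dim W\leq\max_{\theta\in\fS_n}\sum_i h_{i,\theta(i)}\) », que vous justifiez par « une analyse composante par composante » reposant sur un appariement \(\theta\) entre équations et variables. Or cette borne de dimension n'est pas un sous-lemme : c'est l'inégalité de Jacobi elle-même, transportée dans l'espace de prolongement, et l'argument d'élimination que vous esquissez ne la démontre pas --- il en décrit la forme attendue. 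En l'absence d'une preuve de cette borne, le reste du plan (expressibilité au premier ordre, réduction à \(\alg{\Ff}_q\), correspondance) ne fait que reformuler l'énoncé. Il faudrait aussi justifier que \(p_2\) est quasi-fini sur \(W\) avant d'appliquer le théorème~\ref{Twist LW}.

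Le texte contourne cette difficulté par un choix différent de « chose à majorer ». Plutôt que de borner directement la dimension d'une variété de prolongement, il spécialise les polynômes aux différences par un morphisme d'anneaux aux différences \((D,\sigma)\to(\alg{\Ff}_q,\phi_q)\) --- ce qui est plus souple qu'un transfert au premier ordre, \(\tdim\) faisant intervenir un supremum sur les extensions --- et observe que, sous \(\sigma\mapsto\phi_q\), le polynôme aux différences \(P_j\) devient un polynôme \emph{ordinaire} \(Q_j\) de degré \(d_{i,j}(q)\sim m\,q^{h_{i,j}}\) en \(x_i\). Le lemme combinatoire à démontrer devient alors une inégalité de Bézout multi-graduée \(|X_q(\alg{\Ff}_q)|\leq\sum_{\theta\in\fS_n}\prod_i d_{i,\theta(i)}(q)\) pour un système polynomial classique, qui s'obtient par un calcul de produit d'intersection ; le théorème~\ref{Twist LW} identifie \(\tdim X\) à l'exposant de croissance de \(|X_q(\alg{\Ff}_q)|\), et la borne de Jacobi s'ensuit en prenant \(\log_q\). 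Le point clé --- que l'ordre \(h_{i,j}\) gouverne l'exposant du degré après spécialisation Frobenius, ce qui ramène un problème de dimension aux différences à un comptage polynomial ordinaire --- est précisément ce qui manque à votre esquisse, et c'est lui qui rend la preuve effective.
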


C'est l'équivalent pour l'algèbre aux différences d'une conjecture de Jacobi (non résolue) en algèbre différentielle.\medskip

La preuve consiste à prouver, d'abord, un résultat similaire pour les équations polynomiales:
\begin{prop}
Soient \(P_j(x_1,\ldots,x_n)\), où \(1 \leq  j\leq n\), des polynômes sur un corps algébriquement clos \(K\). Soit \(X\) le lieu des zéros des \(P_j\), et \(X_0\) l'union de ses composantes irréductibles de dimension \(0\). Alors
\begin{equation}\label{Bezout}
|X_0(K)| \leq \sum_{\theta\in\fS_n} \prod_{i\leq n} d_{i,\theta(i)},
\end{equation}
où \(d_{i,j}\) est le degré de \(P_j\) en \(x_i\).
\end{prop}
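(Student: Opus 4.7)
The plan is to reinterpret (\ref{Bezout}) as a multi-projective Bézout bound in \((\Pp^1)^n\). First, I would embed \(\Aa^n_K\) as the open subset \(\prod_i (\Pp^1 \setminus \{\infty\})\) of \((\Pp^1)^n\). For each \(j \leq n\), since \(P_j\) has degree at most \(d_{i,j}\) in the variable \(x_i\), multi-homogenising with respect to coordinates \([x_i : y_i]\) on the \(i\)-th factor produces a well-defined effective Cartier divisor \(D_j\) on \((\Pp^1)^n\) whose restriction to \(\Aa^n\) is the hypersurface \(\{P_j = 0\}\). Letting \(h_i \in A^1((\Pp^1)^n)\) denote the pullback of the hyperplane class of the \(i\)-th factor, the class of \(D_j\) in the Chow ring \(A^\star((\Pp^1)^n) \simeq \Zz[h_1,\ldots,h_n]/(h_1^2,\ldots,h_n^2)\) is
\[ [D_j] = \sum_{i=1}^n d_{i,j}\, h_i. \]

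The key computation is then a direct expansion in this Chow ring. The monomial \(h_1 \cdots h_n\) is the class of a point, and the relations \(h_i^2 = 0\) kill every term in the product \(\prod_j [D_j]\) except those indexed by a bijection \(\theta \in \fS_n\) (which selects one \(h_i\) per factor, each exactly once). After the change of index \(i = \theta(j)\), this gives the clean identity
\[ [D_1] \cdots [D_n] = \Bigl(\sum_{\theta \in \fS_n} \prod_{i=1}^n d_{i,\theta(i)}\Bigr)\, h_1 \cdots h_n. \]
The integer on the right is exactly the bound (\ref{Bezout}), so it remains to show that \(|X_0(K)|\) is at most this top intersection number.

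For this I would invoke the standard refined Bézout inequality: since the \(D_j\) are effective divisors on the smooth projective variety \((\Pp^1)^n\) of dimension \(n\), the number of isolated points of the scheme-theoretic intersection \(\bigcap_j D_j\) is at most the degree of the 0-cycle \([D_1] \cdots [D_n]\). Indeed, decomposing this 0-cycle into contributions supported on the connected components of the intersection, each isolated point contributes a local intersection multiplicity \(\geq 1\), while positive-dimensional components contribute non-negative classes. Since every 0-dimensional irreducible component of \(X \subseteq \Aa^n\) is a \(K\)-point that is isolated in \(\bigcap_j D_j \subseteq (\Pp^1)^n\), the bound follows. The main subtlety is precisely this excess-intersection step: the divisors \(D_j\) may well share positive-dimensional components, in particular at infinity, so a naive product-of-degrees argument is inadequate, and one must rely on the distinguished-varieties refinement à la Fulton (or, equivalently, on a local argument with Serre intersection multiplicities) to correctly isolate the contribution of the points of \(X_0(K)\) among all components of \(\bigcap_j D_j\).
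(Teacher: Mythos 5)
Your proof is correct and is exactly the computation the paper alludes to when it states, without further detail, that ``cette version du théorème de Bézout se démontre aisément en calculant un produit d'intersection'': multi-homogenise in \((\Pp^1)^n\), read off \([D_j] = \sum_i d_{i,j} h_i\) in \(\Zz[h_1,\ldots,h_n]/(h_i^2)\), and observe that the product is the permanent times the point class. Your attention to the excess-intersection step (positive-dimensional components, notably at infinity, contributing non-negatively, which here can be secured by nefness of the \(D_j\)) is precisely the point the paper's one-line proof sketch leaves implicit.
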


Cette version du théorème de Bézout se démontre aisément en calculant un produit d'intersection.

On relie ensuite cette borne à la borne recherchée par un argument de spécialisation --- cette fois-ci en algèbre aux différences. Soit \((D,\sigma) \leq (K,\sigma)\) un anneau aux différences finiment engendré, bien choisi, qui contient les coefficients des \(P_j\). Soient \(f \colon (D,\sigma) \to (\alg{\Ff}_q,\phi_q)\) un morphisme d'anneaux aux différences et \(Q_j = f(P_j)\). Ce polynôme aux différences s'identifie, puisque \(\phi_q(x) = x^q\), à un polynôme de degré \(d_{i,j}(q)\) en \(x_i\), avec \(d_{i,j}(q)\) de l'ordre de \(m q^{h_{i,j}} + O(q^{h_{i,j} - 1})\), où \(m\in \Zz_{>0}\), quand \(q\) est grand. On a donc
\[\lim_{q\to\infty} \log_q d_{i,j}(q) = h_{i,j}.\]

Par ailleurs, si \(X_q\) dénote le lieu des zéros des \(Q_j\), on peut montrer que \(X_q\) est de dimension zéro et on peut déduire du théorème~\ref{Twist LW}, que pour \(q\) suffisamment grand,
\[|X_q(\alg{\Ff}_q)| = c q^{\tdim(X)} + O(q^{\tdim(X)-1/2}),\] où \(c\in \Qq_{>0}\). Il s'ensuit que
\begin{align*}
\tdim(X) &= \lim_{q\to\infty} \log_q |X_q(\alg{\Ff}_q)|\\
&\leq \lim_{q\to\infty} \log_q \sum_{\theta\in\fS_n} \prod_{i\leq n} d_{i,\theta(i)}(q) & \text{par~(\ref{Bezout}})\\
&= \max_{\theta\in\fS_n} \sum_{i\leq n}h_{i,\theta(i)}.
\end{align*}

\printbibliography

\end{document}
